\documentclass[preprint,12pt]{elsarticle}

\usepackage[english]{babel}
\usepackage[utf8x]{inputenc}
\usepackage[T1]{fontenc}
\usepackage{amssymb,amsfonts,amsmath,latexsym}
\usepackage{hyperref}     % Para referenciar
\usepackage{graphicx}     % Para incluir graficos
\usepackage{xcolor}      % Para definir colores
\usepackage{setspace}    % Para incluir interlineados
\usepackage{booktabs}    % Para hacer tablas
\usepackage{longtable}   % Para el tipo de tablas utilizados
\usepackage{textcomp}    % Para algunos signos y simbolos
\usepackage{bm}         % Para escribir simbología en negrita

\usepackage{verbatim}

\def \ds{\displaystyle}

\newcommand{\R}{\mathbb R}
\newcommand{\C}{\mathbb C}

\newtheorem{thrm}{Theorem}
\newtheorem{lmm}{Lemma}
\newdefinition{rmrk}{Remark}
\newdefinition{nt}{Note}
\newproof{proof}{Proof}
\newtheorem{dfntn}{Definition}
\newtheorem{prpstn}{Proposition}
\newtheorem{xmpl}{Example}

\makeatletter
\def\ps@pprintTitle{}
\makeatother

\begin{document}

\begin{frontmatter}

%% Title, authors and addresses

%% use the tnoteref command within \title for footnotes;
%% use the tnotetext command for theassociated footnote;
%% use the fnref command within \author or \address for footnotes;
%% use the fntext command for theassociated footnote;
%% use the corref command within \author for corresponding author footnotes;
%% use the cortext command for theassociated footnote;
%% use the ead command for the email address,
%% and the form \ead[url] for the home page:
%% \title{Title\tnoteref{label1}}
%% \tnotetext[label1]{}
%% \author{Name\corref{cor1}\fnref{label2}}
%% \ead{email address}
%% \ead[url]{home page}
%% \fntext[label2]{}
%% \cortext[cor1]{}
%% \affiliation{organization={},
%%             addressline={},
%%             city={},
%%             postcode={},
%%             state={},
%%             country={}}
%% \fntext[label3]{}

\title{Regularization operators for identifying the unknown source in the time-fractional convection-diffusion-reaction equation}

%% use optional labels to link authors explicitly to addresses:
%% \author[label1,label2]{}
%% \affiliation[label1]{organization={},
%%             addressline={},
%%             city={},
%%             postcode={},
%%             state={},
%%             country={}}
%%
%% \affiliation[label2]{organization={},
%%             addressline={},
%%             city={},
%%             postcode={},
%%             state={},
%%             country={}}

%\author{Guillermo Federico Umbricht$^{a,b,*}$}
%\ead{guilleungs@yahoo.com.ar}

%\affiliation[1]{organization={Departamento de Matem\'atica, Facultad de Ciencias Empresariales, Universidad Austral},%Department and Organization
            %addressline={Paraguay 1950}, 
            %city={Rosario},
            %postcode={S2000FZF}, 
            %state={Santa Fe},
            %country={Argentina}}
						
%\affiliation[2]{organization={Consejo Nacional de Investigaciones Cient{\'i}ficas y T\'ecnicas (CONICET)},%Department and Organization
 %          addressline={Godoy Cruz 2290}, 
  %        city={CABA},
   %       postcode={C1425FQB}, 
    %      state={Buenos Aires},
     %     country={Argentina}}

\author{Guillermo Federico Umbricht$^{a,*}$}
\ead{GUmbricht@austral.edu.ar}

\affiliation{organization={Laboratorio de Investigaci\'on, Desarrollo y Transferencia de la Universidad Austral (LIDTUA), Facultad de Ingenier\'ia, Universidad Austral},%Department and Organization
            addressline={Mariano Acosta 1611}, 
            city={Pilar},
            postcode={B1629WWA}, 
            state={Buenos Aires},
            country={Argentina}}

\cortext[cor1]{Corresponding author}

\author{Diana Rubio$^{b}$}

\affiliation{organization={Instituto de Tecnolog\'ias Emergentes y Ciencias
Aplicadas (UNSAM-CONICET), Centro de Matem\'atica
Aplicada, Escuela de Ciencia y Tecnolog\'ia, Universidad
Nacional de General San Mart\'in},%Department and Organization
           addressline={25 de mayo y Francia}, 
          city={San Mart\'in},
          postcode={B1650}, 
          state={Buenos Aires},
          country={Argentina}}

\begin{abstract}

This article presents a mathematical study of the problem of identifying a time-dependent source term in transport processes described by a time-fractional parabolic equation, based on noisy time-dependent measurements taken at an arbitrary position. The problem is analytically solved using Fourier techniques, and it is shown that the solution is unstable. To address this instability, three one-parameter families of regularization operators are proposed, each designed to counteract the factors responsible for the instability of the inverse operator. Additionally, a new rule for selecting the regularization parameter is introduced, and an error bound is derived for each estimate. Numerical examples with varying characteristics are provided to illustrate the advantages of the proposed strategies.

\end{abstract}

%%Graphical abstract
%\begin{graphicalabstract}
%\includegraphics{grabs}
%\end{graphicalabstract}

%%Research highlights
%\begin{highlights}
%\item Research highlight 1
%\item Research highlight 2
%\end{highlights}

\begin{keyword}
%% keywords here, in the form: keyword \sep keyword
Time-fractional parabolic equation \sep Inverse problems \sep Ill-posed problems \sep Regularization operators.
%% PACS codes here, in the form: \PACS code \sep code

%% MSC codes here, in the form: \MSC code \sep code
%% or \MSC[2008] code \sep code (2000 is the default)
%\MSC 58J35 \sep 80A23 \sep 35R25 \sep 47A52
\end{keyword}

\end{frontmatter}

%% \linenumbers

%% main text
\section{Introduction}

In recent years, fractional differential equations have become a prominent area of research, attracting considerable attention from scholars. A comprehensive historical overview of the development of fractional differential operators can be found in \cite{Miller93, Oldham74}. This growing interest can be attributed to two key factors. Firstly, fractional derivatives extend the concept of integer-order derivatives, offering the potential to address challenges that integer-order derivatives cannot overcome \cite{Su10}. Secondly, the wide range of applications across various fields, including medicine, chemistry, biology, physics, electrical engineering, and mechanical engineering, as well as in fractional dynamics, control theory, signal processing, and system identification, have further fueled this interest. For examples, see \cite{Podlubny99, Metzler00, Scalas00}.

Time-fractional parabolic differential equations, especially the time-fractional diffusion equation, are derived from continuous-time random walks and are employed to model specific transport processes, often referred to in the literature as anomalous diffusion problems, including superdiffusion and subdiffusion \cite{Su10, Qiu20}. Recently, these equations have been explored both analytically and numerically in various contexts by several authors. Concrete examples can be found in \cite{Jiang11, Lin07, Luchko10, Luchko11, Murio08, Gul20}.

On the other hand, the problem of source determination has been extensively analyzed in recent years across various scientific and engineering fields, attracting considerable attention in current research. This problem has applications in areas such as heat conduction \cite{Alifa75, Elden00, Umbricht19d, Zhao11}, crack identification \cite{Zeng96}, electromagnetic theory \cite{Zeng96, Banks15, Ciaret10, Elbadia00}, geophysical prospecting \cite{Beroza88}, contaminant detection \cite{Li06}, and tumor cell detection \cite{Macleod99}, among others. The literature offers several methods for source determination, with important tools including the logarithmic potential method \cite{Ohe94}, the projective method \cite{Nara03}, Green's functions \cite{Hon10}, dual reciprocity boundary element methods \cite{Farcas03,Sun97} and the method of fundamental solutions \cite{Jin07}, among others.

The problem of source determination is considered ill-posed in the sense of Hadamard \cite{Hada24}, as the solution does not depend continuously on the data. Regularization methods \cite{Engl96, Kirsch11} are essential for estimating unstable solutions. Among the most commonly used methods are the iterative regularization method \cite{Honchbruck09, Johansson08}, the simplified Tikhonov regularization method \cite{Fu04, Cheng07, Cheng08}, the modified regularization method \cite{Natterer84, Yang10b, Yang10, Yang11, Zhao14}, Fourier truncation \cite{Yang11}, and the mollification method \cite{Yang14b}.

Regarding the use of regularization methods for estimating the source in classical parabolic equations, numerous studies address the problem specifically for the diffusion equation, utilizing various approaches, characteristics, and data \cite{Elden00, Zhao14, Yang14b, Yan10b, Dou09, Dou09b, Martin96, Trong05, Trong06, Farcas06, Ahmadabadi09, Johansson07, Liu09}. Fewer articles focus on the estimation of the source term in complete classical parabolic equations \cite{Umbricht19a, Umbricht19b, Umbricht21, Umbricht24, Li19, Le20, Lukyanenko23}.  

Concerning the identification of an unknown source in a fractional parabolic equation using a regularization method, only a few works address the problem for a fractional diffusion equation. In \cite{Yang14, Murio08b}, the authors use the mollification method to determine the time-dependent source in the fractional diffusion equation. In \cite{Yang15}, they estimate the time-dependent source using the quasi-reversibility regularization method. In \cite{Zhang11}, the authors employ analytical continuation and the Laplace transform to demonstrate the uniqueness of identifying the unknown source, which depends solely on the spatial variable, in the fractional diffusion equation within a bounded domain. In \cite{Wei10}, the coupled method is used to identify the unknown source for spatial fractional diffusion equations. In \cite{Chi11}, an optimal perturbation regularization algorithm is employed to identify the unknown source, which depends only on the spatial variable, in spatial fractional diffusion equations. In \cite{Wei13}, the boundary element method combined with a generalized Tikhonov regularization method is used to identify the unknown source, which depends only on the temporal variable, in the time-fractional diffusion equation. To the best of the authors' knowledge, there are no articles that estimate the source using any regularization method in a time-fractional convection-diffusion-reaction equation. Addressing this problem is crucial since fractional diffusion equations alone are insufficient to model all processes occurring in nature.

This paper addresses the estimation of the unknown source term in a time-fractional convection-diffusion-reaction equation using time-dependent input data. This problem is ill-posed because high-frequency components of arbitrarily small errors in the data can lead to disproportionately large errors in the solution. To mitigate this challenge, three families of one-parameter regularization operators are introduced to counteract the instability of the inverse operator. These regularization operators reformulate the ill-posed problem into families of well-posed problems that approximate the original one. Additionally, a new rule for the a priori selection of the regularization parameter is proposed, which is independent of the norm of the function to be estimated. The stability and convergence of each method are analyzed, and an optimal bound for the estimation error is derived.

The three strategies presented here were recently employed for source term estimation in a classical linear parabolic equation \cite{Umbricht24}. In this work, we generalize those methods to a time-fractional convection-diffusion-reaction equation, where the input measurements or data are time-dependent and can be taken at any position. To demonstrate the effectiveness of the proposed regularization methods and compare the performance of the introduced regularization operators, numerical examples with different characteristics are provided.

\section{Source identification}

In this section, we formally present the inverse problem under study, provide an analytical expression for its solution, and demonstrate that the problem is ill-posed.

\subsection{Presentation of the problem} 

Let $u:\R^+ \times \R^+ \rightarrow \R$, we aim to determine the source term in the following linear time-fractional parabolic equation on an unbounded domain.
\begin{equation}
\partial_{0^+}^{\alpha} u(x,t)=\omega \,u_{xx}(x,t)-\beta \, u_{x}(x,t)-\nu \, u(x,t)+f(t), \qquad  x>0, \,\,\, t>0,
\label{Ec_1_Mod}
\end{equation}
where the time-fractional derivative $\partial_{0^+}^{\alpha}$ is the Caputo fractional derivative of order $\alpha$ $(0<\alpha\leq1)$ defined by \cite{Caputo67}
\begin{equation}
\partial_{0^+}^{\alpha}u(x,t)=
\begin{cases}
\dfrac{1}{\Gamma(1-\alpha)} \ds \int_0^t \dfrac{\partial u(x,s)}{\partial s} \dfrac{ds}{(t-s)^\alpha}, & \quad 0<\alpha<1, \vspace{0.25cm} \\
\dfrac{\partial u(x,t)}{\partial t},  & \quad \alpha=1,  
\label{Ec.Fracc}
\end{cases}
\end{equation}
with $\Gamma$ being the Gamma function defined by:
\begin{equation}
\Gamma(z)=\ds \int_{0}^{\infty} t^{z-1} e^{-t} \, dt.
\label{Gamma}
\end{equation}

Without loss of generality, we assume that the initial and boundary conditions are zero, i.e.,
\begin{equation}
\label{C_I}
\begin{cases}
u(x,t)=0,  & \quad x \geq 0, \,\,\, t=0, \\
u(x,t)=0, & \quad x =0, \,\,\, t\geq0,
\end{cases}
\end{equation}
additionally, we assumed that
\begin{equation}
\label{Cota_u}
u(x,t) \textit{ is bounded in } [0,+\infty) \times [0,+\infty).
\end{equation}
The determination of the source term $f(t)$ in \eqref{Ec_1_Mod}-\eqref{Gamma}, under the conditions \eqref{C_I}-\eqref{Cota_u}, is performed using experimental or simulated noisy data, at a specific position $x_0$. 
\begin{equation}
\label{Med_Ruidosas}
u(x_0,t)=y(t), \qquad y_{\delta}(t)=y(t) + \textit{noise}, \,\,\, t\geq0,  
\end{equation}
where $y_{\delta}$ represents the noisy data or measurements, and $\delta$ denotes the noise in the data. It is also assumed that this noise is bounded, that is,
\begin{equation} 
\label{noiselevel}
    ||y(t)-y_{\delta}(t)||_{L^2(\R)}\le \delta, \qquad 0<\delta < \delta_M,\\
\end{equation} 
wich $\delta_M \in \R^{+} $ is the maximum noise level tolerated. In practice, $\delta_M$ is determined based on  measurement, instrumentation and calibration errors.

\subsection{Problem solution}

The source estimation problem is solved using the Fourier transform, which is included here for the sake of completeness.

\begin{dfntn}\label{DefTransf} 

 Let $g \in L^2(\R)$, the Fourier transform \cite{Marks09} is defined by
\begin{equation*}
\widehat{g}(\xi) :=\dfrac{1}{\sqrt{2\pi}} \ds\int\limits_{-\infty}^{\infty}{e^{-i \xi t} \, g (t) \,  dt}, \qquad   \xi \in \R.
\end{equation*}

Let $\widehat{g} \in L^2(\R)$, the Fourier antitransform is defined by
\begin{equation*}
\label{defantitransfn}
g(t) :=\dfrac{1}{\sqrt{2\pi}}\ds\int\limits_{-\infty}^{\infty}{e^{i \xi t} \, \widehat{g} (\xi) \,  d\xi}, \qquad  t \in \R.
\end{equation*}
\end{dfntn}

\begin{nt}
%\textcolor{red}{
%The Fourier transform in $L^{2}$ is the extension of the Fourier transform to square-integrable functions. It is defined as an operator that maps a function $f \in L^{2}$ to its transform $\widehat{f} \in L^{2}$, preserving the norm of the function and endowing the operator with the properties of being linear, bounded, bijective, and unitary. This operator is constructed by considering a dense subset of $L^{2}$ (such as $L^{1} \cap L^{2}$), where the Fourier transform is well defined, and then extending it by continuity to the whole space $L^{2}$.
%}

The Fourier transform on $L^{2}$ is defined as the unitary extension of the Fourier transform from $L^{1} \cap L^{2}$ to the whole space $L^{2}$. It preserves the $L^{2}$-norm and is a linear, bounded, bijective, and unitary operator.
\end{nt}

By using Definition \ref{DefTransf}, we can find the analytical solution to the problem of interest given by \eqref{Ec_1_Mod}-\eqref{noiselevel}. The solution is presented in the following result.

\begin{thrm}[Source identification]\label{teorema_Fuente_Inversa}
Let $0<\alpha\leq1$ and the parameters $\omega,\beta, \nu ,x_0, \delta, \delta_M \in \R^+$ with $\delta<\delta_M $. Consider functions $u(x, \cdot), f(\cdot), y(\cdot), y_\delta(\cdot) \in L^2(\R)$ such that $||y-y_{\delta}||_{L^2(\R)}\le \delta$ and $y_{\delta}(t)=y(t) + \textit{noise}$ satisfying the parabolic problem
\begin{equation}
\label{transpeqn}
\begin{cases} 
\partial_{0^+}^{\alpha} u(x,t)=\omega \,u_{xx}(x,t)-\beta \, u_{x}(x,t)-\nu \, u(x,t)+f(t), & \quad  x>0, \,\,\, t>0, \\
u(x,t)=0,  & \quad x \geq 0, \,\,\, t=0, \\
u(x,t)=0, & \quad x =0, \,\,\, t\geq0, \\
u(x,t)\big|_{x\rightarrow \infty} \text {bounded},  & \quad t \geq 0\\
u(x,t)=y(t), & \quad  x=x_0, \,\,\, t\geq0,
\end{cases}
\end{equation}

Then, the expression for the source in the Fourier variable is given by

\begin{equation*}
\widehat{f}(\xi )=\Lambda (\xi )\widehat{y}(\xi ),
\end{equation*}
where
\begin{equation*}
\Lambda (\xi )=\dfrac{z(\xi)}{1-e^{-h(\xi)\, x_0}}, \qquad h(\xi) = \dfrac{-\beta+\sqrt{\beta^2+4\,\omega \, z(\xi) }}{2\,\omega}, \qquad z(\xi)= \nu+(i \, \xi)^{\alpha}.
\end{equation*}

\end{thrm}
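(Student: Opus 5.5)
We will take the Fourier transform in the time variable and solve the resulting boundary value problem in $x$. Extend $u(x,\cdot)$, $f$ and $y$ by zero for $t<0$, which is consistent with the zero initial condition, so that the transforms of Definition \ref{DefTransf} are in fact one-sided integrals over $t>0$. The first step is the transform of the Caputo derivative. For $0<\alpha<1$, write $\partial_{0^+}^{\alpha}u$ as the convolution of $u_t$ with the kernel $k(t)=t^{-\alpha}/\Gamma(1-\alpha)$ on $t>0$. The convolution theorem, together with $\widehat{u_t}=(i\xi)\widehat{u}$ (using $u(x,0)=0$) and $\widehat{k}(\xi)=(i\xi)^{\alpha-1}$ up to the normalizing constant $\sqrt{2\pi}$, then gives
\begin{equation*}
\widehat{\partial_{0^+}^{\alpha}u}(x,\xi)=(i\xi)^{\alpha}\,\widehat{u}(x,\xi),
\end{equation*}
with the principal branch $(i\xi)^\alpha=|\xi|^\alpha e^{i\,\mathrm{sign}(\xi)\alpha\pi/2}$. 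For $\alpha=1$ the identity is immediate. I expect this to be the main technical point. One route is to compute $\int_0^\infty e^{-i\xi t}t^{-\alpha}dt=\Gamma(1-\alpha)(i\xi)^{\alpha-1}$ by rotating the contour, or equivalently by the Laplace transform evaluated at $s=i\xi$ as a limit from $\mathrm{Re}\,s>0$. This integral is only conditionally convergent, so I would justify it in the $L^2$ sense by approximating with $e^{-\epsilon t}$ damping and letting $\epsilon\to0$.

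Transforming \eqref{transpeqn} in $t$ then yields, for each fixed $\xi$, the ODE in $x$
\begin{equation*}
\omega\,\widehat{u}_{xx}-\beta\,\widehat{u}_x-z(\xi)\,\widehat{u}=-\widehat{f}(\xi),\qquad z(\xi)=\nu+(i\xi)^\alpha,
\end{equation*}
with $\widehat{u}(0,\xi)=0$ and $\widehat{u}$ bounded as $x\to\infty$. A particular solution is the constant $\widehat{f}/z$. Note that $z\neq0$, because $\nu>0$ and $\mathrm{Re}(i\xi)^\alpha=|\xi|^\alpha\cos(\alpha\pi/2)\ge0$. The characteristic roots are $\lambda_\pm=\big(\beta\pm\sqrt{\beta^2+4\omega z}\big)/(2\omega)$. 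Taking the principal square root, $\beta^2+4\omega z$ has positive real part, so $\mathrm{Re}\sqrt{\beta^2+4\omega z}>\beta$. To check this, write $\sqrt{\beta^2+4\omega z}=a+ib$ with $a>0$; then $a^2-b^2=\beta^2+4\omega\,\mathrm{Re}z>\beta^2$, hence $a>\beta$. Consequently $\mathrm{Re}\lambda_+>0$ and $\mathrm{Re}\lambda_-<0$, and $\lambda_-=-h(\xi)$ with $\mathrm{Re}\,h>0$. Boundedness kills the $e^{\lambda_+x}$ mode, and the condition at $x=0$ gives
\begin{equation*}
\widehat{u}(x,\xi)=\frac{\widehat{f}(\xi)}{z(\xi)}\big(1-e^{-h(\xi)x}\big).
\end{equation*}

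Finally, evaluate at $x=x_0$ using $\widehat{u}(x_0,\xi)=\widehat{y}(\xi)$. Since $\mathrm{Re}\,h>0$ and $x_0>0$, we have $|e^{-hx_0}|<1$, so $1-e^{-h x_0}\neq0$ and we can solve $\widehat{f}=\Lambda\widehat{y}$ with $\Lambda$ as stated. The remaining justification is interchanging the transform with the $x$-derivatives. I would handle this formally, assuming enough regularity of $u$ in the $L^2(\R)$ setting of the hypotheses, as is standard for this kind of source identification result.
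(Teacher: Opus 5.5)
Your proposal is correct and follows the paper's own route: Fourier transform in $t$ using the symbol $(i\xi)^{\alpha}$ for the Caputo derivative, then solving the constant-coefficient ODE in $x$ with the conditions at $x=0$ and $x\to\infty$, then evaluating at $x_0$. You also supply details the paper only cites or asserts, namely the derivation of the fractional symbol, the check $\mathrm{Re}\,h(\xi)>0$ that selects the decaying mode, and the nonvanishing of $z(\xi)$ and of $1-e^{-h(\xi)x_0}$.
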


\begin{proof}
In the proof of Theorem \ref{teorema_Fuente_Inversa}, the Fourier transform defined in \ref{DefTransf} is applied to the time variable of the system described by \eqref{transpeqn}, using the fact that $\partial_{0^+}^{\alpha} u(x,t) = (i \, \xi)^{\alpha} \, \widehat{u}(x,\xi)$ \cite{Podlubny99}.

\begin{equation}
\label{transpeqn2}
\begin{cases} 
(i \, \xi)^{\alpha} \, \widehat{u}(x,\xi)=\omega \,\widehat{u}_{xx}(x,\xi)-\beta \, \widehat{u}_{x}(x,\xi)-\nu \, \widehat{u}(x,\xi)+\widehat{f}(\xi), & \,  x>0, \, \xi \in \R, \\
\widehat{u}(x,\xi)=0, & \, x =0, \, \xi \in \R, \\
\widehat{u}(x,\xi)\big|_{x\rightarrow \infty} \textit{bounded},  & \xi \in \R, \\
\widehat{u}(x,\xi)=\widehat{y}(\xi), & \,  x=x_0, \, \xi \in \R, 
\end{cases}
\end{equation}
where $\xi$ is the Fourier variable. Equivalently, the identification problem given by \eqref{transpeqn} can be reformulated in the frequency domain using \eqref{transpeqn2}, as follows:
\begin{equation}
\label{transpeqn3}
\begin{cases} 
\omega \,\widehat{u}_{xx}(x,\xi)-\beta \, \widehat{u}_{x}(x,\xi)-z(\xi) \, \widehat{u}(x,\xi)=-\widehat{f}(\xi), & \,  x>0, \, \xi \in \R, \\
\widehat{u}(x,\xi)=0, & \, x =0, \, \xi \in \R, \\
\widehat{u}(x,\xi)\big|_{x\rightarrow \infty} \textit{bounded},  & \xi \in \R, \\
\widehat{u}(x,\xi)=\widehat{y}(\xi), & \,  x=x_0, \, \xi \in \R, 
\end{cases}
\end{equation}
where $z(\xi)$ is defined by the following expression:

\begin{equation}
\label{z}
z(\xi)= \nu+(i \, \xi)^{\alpha}.
\end{equation}

The system described by \eqref{transpeqn3} is represented by a second-order non-homogeneous differential equation with an initial condition. The analytical solution to this equation is given by,
\begin{equation}
\label{solutionu}
\widehat{u}(x,\xi) = \dfrac{1-e^{-h(\xi)\, x}}{z(\xi)}\widehat{f}(\xi),
\end{equation}
where
\begin{equation}
\label{h}
h(\xi) = \dfrac{-\beta+\sqrt{\beta^2+4\,\omega \, z(\xi) }}{2\,\omega}.
\end{equation}

Since $\widehat{u}(x_0,\xi)=\widehat{y}(\xi)$, evaluating \eqref{solutionu} at $x=x_0$ produces an expression for the source in the frequency domain, 

\begin{equation}
\label{illf}
\widehat{f}(\xi )=\Lambda (\xi )\widehat{y}(\xi ),
\end{equation}
where $\Lambda$ denotes the inverse operator defined by:
\begin{equation}
\label{Lambda}
\Lambda (\xi )=\dfrac{z(\xi)}{1-e^{-h(\xi)\, x_0}}.
\end{equation}
%
%Equivalently, applying the Definition \ref{DefTransf} (of the Fourier antitransform) \textcolor{blue}{to the results \eqref{illf} and, as the available data contain noise, it follows that:} 
%\begin{equation}
%\label{solucion_fuente0}
%f(t) =\dfrac{1}{\sqrt{2\pi}} \ds\int\limits_{-\infty}^{\infty} e^{i \xi  t} \Lambda (\xi) \left[\dfrac{1}{\sqrt{2\pi}} \ds\int\limits_{-\infty}^{\infty} e^{-i \xi  t} y(t)dt\right]d\xi.
%\end{equation}
%
%
%\begin{equation}
%\label{solucion_fuente}
%f_{\delta}(t) =\dfrac{1}{\sqrt{2\pi}} \ds\int\limits_{-\infty}^{\infty} e^{i \xi  t} \Lambda (\xi) \left[\dfrac{1}{\sqrt{2\pi}} \ds\int\limits_{-\infty}^{\infty} e^{-i \xi  t} y_{\delta}(t)dt\right]d\xi,
%\end{equation}
%
which concludes the proof.

\end{proof}

\begin{rmrk}
For the particular case $x_0=1$, $\omega=1$ and $\beta=\nu=0$, it follows that $z(\xi)=(i\,\xi)^\alpha$, $h(\xi)=(i\,\xi)^{\alpha/2}$ and therefore $\Lambda(\xi)=\dfrac{(i\,\xi)^\alpha}{1-e^{-(i\,\xi)^{\alpha/2}}}$. This result coincides with the findings in \cite{Yang14,Yang15}, where the estimation of the unknown source in the time-fractional diffusion equation is studied.
\end{rmrk}

\subsection{Ill-posed problem}

We now demonstrate that the problem of identifying the time-dependent source in a time-fractional parabolic equation from noisy measurements is ill-posed in the sense of Hadamard \cite{Hada24}, as the solution does not depend continuously on the data. This ill-posedness arises because the inverse operator $\Lambda$ (defined in \eqref{Lambda}) is unbounded, as will be shown below.

\begin{thrm} [The problem is ill-posed]\label{Teo_Prob_mal_Planteado}

Under the assumptions given in \eqref{Ec_1_Mod}-\eqref{noiselevel}, the identification problem \eqref{transpeqn} presented in Theorem \ref{teorema_Fuente_Inversa} is an ill-posed problem in the sense of Hadamard, as the solutions do not vary continuously with respect to the data.
\end{thrm}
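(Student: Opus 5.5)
The plan is to show that the multiplier $\Lambda$ from Theorem \ref{teorema_Fuente_Inversa} is unbounded on $\R$, with $|\Lambda(\xi)|\to\infty$ as $|\xi|\to\infty$. From this I will build data perturbations whose $L^2$ norm tends to zero while the corresponding source perturbations have $L^2$ norm that stays bounded away from zero, or even blows up. By Plancherel (the unitarity of the Fourier transform noted after Definition \ref{DefTransf}), all norms can be computed in the frequency variable. So it suffices to work with $\widehat{f}=\Lambda\widehat{y}$.

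\textbf{Step 1: asymptotics of $\Lambda$.} Write $(i\xi)^\alpha=|\xi|^\alpha e^{i\,\mathrm{sgn}(\xi)\alpha\pi/2}$. Then $|z(\xi)|\ge |\xi|^\alpha\cos(\alpha\pi/2)$ for $\alpha<1$, and $|z(\xi)|\ge|\xi|$ for $\alpha=1$. In either case $|z(\xi)|\to\infty$. Taking the principal branch of the square root, $\mathrm{Re}\sqrt{\beta^2+4\omega z(\xi)}>0$, and in fact it grows like $\sqrt{4\omega}\,|\xi|^{\alpha/2}\cos(\alpha\pi/4)$. Hence $\mathrm{Re}\,h(\xi)\to+\infty$ and $|e^{-h(\xi)x_0}|=e^{-\mathrm{Re}\,h(\xi)x_0}\to 0$. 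This gives
\[
|\Lambda(\xi)|\ \ge\ \frac{|z(\xi)|}{1+e^{-\mathrm{Re}\,h(\xi)x_0}}\ \ge\ \tfrac12|z(\xi)|\ \ge\ \tfrac12\cos(\alpha\pi/2)\,|\xi|^\alpha
\]
for all large $|\xi|$ when $\alpha<1$, with the analogous bound $\tfrac12|\xi|$ when $\alpha=1$. This step also needs $\mathrm{Re}\,h>0$ globally, which follows since $\mathrm{Re}\,z\ge\nu>0$. That makes the boundedness condition at infinity select the decaying exponential, so $\Lambda$ is well defined.

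\textbf{Step 2: counterexample sequence.} Fix exact data $y$ with source $f$. For $n\in\mathbb N$ define the perturbation by its transform,
\[
\widehat{y}_n(\xi)=\widehat{y}(\xi)+\frac{\delta}{\sqrt{2}}\,\chi_{[n,n+1]}(\xi)\,\cdots,
\]
or more simply
\[
\widehat{y}_n-\widehat{y}=\varepsilon_n\chi_{[n,n+1]}.
\]
Then $\|y_n-y\|_{L^2}=\varepsilon_n$. The source change satisfies
\[
\|f_n-f\|_{L^2}^2=\varepsilon_n^2\int_n^{n+1}|\Lambda|^2\,d\xi\ \ge\ \tfrac14\cos^2(\alpha\pi/2)\,\varepsilon_n^2 n^{2\alpha}.
\]
Choosing $\varepsilon_n=n^{-\alpha/2}$ makes the data error tend to zero while the source error tends to infinity. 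Hence the solution map $y\mapsto f$ is not continuous in $L^2$, which is Hadamard ill-posedness.

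The only delicate point is Step 1: controlling the complex branch of $\sqrt{\beta^2+4\omega z}$ so that the denominator $1-e^{-hx_0}$ stays bounded, i.e. lies in $[\tfrac12,2]$ in modulus for large $|\xi|$. I would handle it by the argument estimate $\arg(\beta^2+4\omega z)\in(-\alpha\pi/2,\alpha\pi/2)$. This yields $\mathrm{Re}\sqrt{\cdot}\ge|\cdot|^{1/2}\cos(\alpha\pi/4)$, which is enough.
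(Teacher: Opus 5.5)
Your proof is correct and follows the paper's route. The paper's proof rests on exactly your lower bound $|\Lambda(\xi)|\ge |z(\xi)|/(1+e^{-\Re(h(\xi))x_0})$ and then only asserts that $\Lambda$ is unbounded and amplifies high-frequency noise, whereas you also justify $\Re h>0$ and $|z(\xi)|\to\infty$ and make the discontinuity explicit with the perturbations $\varepsilon_n\chi_{[n,n+1]}$, $\varepsilon_n=n^{-\alpha/2}$. If you want the perturbed data to be real-valued, as $u$ is, use $\varepsilon_n(\chi_{[n,n+1]}+\chi_{[-n-1,-n]})$ instead; the same lower bound on $|\Lambda|$ holds for $\xi<0$.
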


\begin{proof}

We denote $\widehat{f_{\delta}}(\xi)=\Lambda (\xi )\widehat{y_{\delta}}(\xi )$. It is evident that
\begin{equation}
\label{error_f_fdelta}
\|\widehat{f}(\xi) - \widehat{f_{\delta}}(\xi)\|_{L^2(\R)} = \| \Lambda(\xi)  \widehat{y}(\xi )- \Lambda(\xi) \widehat{y_{\delta}}(\xi )\|_{L^2(\R)}=
\left\| \Lambda(\xi)(\widehat{y}(\xi )- \widehat{y_{\delta}}(\xi ))\right\|_{L^2(\R)},
\end{equation}
on the other hand, by using \eqref{z}, \eqref{h}, and \eqref{Lambda}, we find that,

\begin{equation}
\begin{split}
 \label{lim}
| \Lambda (\xi )| &= \left| \dfrac{  z(\xi)}{1-e^{-h(\xi)\, x_0}}\right|
=\left| \dfrac{(i \xi)^{\alpha}+\nu}{1-e^{-\left(\frac{-\beta+\sqrt{\beta^2+4\omega[(i \xi)^{\alpha}+\nu]}}{2\omega}\right)\, x_0}}\right| 
= \dfrac{\left|(i \xi)^{\alpha}+\nu\right|}{\left|1-e^{-\left(\frac{-\beta+\sqrt{\beta^2+4\omega[(i \xi)^{\alpha}+\nu]}}{2\omega}\right)\, x_0}\right|}\\
& \geq \dfrac{\left|(i \xi)^{\alpha}+\nu\right|}{1+e^{-\Re\left(\frac{-\beta+\sqrt{\beta^2+4\omega[(i \xi)^{\alpha}+\nu]}}{2\omega}\, x_0\right)}}=\dfrac{\left|(i \xi)^{\alpha}+\nu\right|}{1+e^{-\Re\left(\frac{-\beta+\sqrt{\beta^2+4\omega \nu+ 4\omega (i \xi)^{\alpha}}}{2\omega}\, x_0\right)}},
\end{split}
\end{equation}
where
\begin{equation}
\label{i_xi_alpha} (i \xi)^{\alpha}=
\begin{cases}
\left|\xi\right|^{\alpha}\left(\cos(\frac{\alpha \,\pi}{2})+i \sin(\frac{\alpha \,\pi}{2}) \right), \qquad \xi\geq0, \\
\left|\xi\right|^{\alpha}\left(\cos(\frac{\alpha \,\pi}{2})-i \sin(\frac{\alpha \,\pi}{2}) \right), \qquad \xi<0.
\end{cases}
\end{equation}

By substituting \eqref{i_xi_alpha} into \eqref{lim}, we find that $\Lambda (\xi)$ is unbounded when  $|\xi|$ tends to infinity. As shown in \eqref{error_f_fdelta}, this unboundedness amplifies the error in measurements at high frequencies, potentially resulting in a significant estimation error $ \|\widehat{f} - \widehat{f_{\delta}}\|_{L^2(\R)}$, even for very small observation or measurement errors. In other words, the solution to the identification problem \eqref{transpeqn} does not vary continuously with respect to the data (see \cite{Engl96}).
\end{proof}

Not only can it be shown that the operator $\Lambda$ is unbounded, but its asymptotic behavior at infinity can also be characterized, as established in the following result.

\begin{prpstn}

The inverse operator $\Lambda$ defined in \eqref{Lambda} satisfies 

\begin{equation}
\dfrac{\left|(i \xi)^{\alpha}+\nu\right|}{1+e^{-\frac{x_0}{2\omega}\Re\left(-\beta+\sqrt{\beta^2+4\omega \nu+ 4\omega (i \xi)^{\alpha}}\right)}}\le \left\vert \Lambda(\xi)\right\vert 
 \leq\dfrac{\nu+|\xi|^{\alpha}}{ 1-e^{-\frac{x_0}{2\omega} \left(-\beta+ \sqrt{\beta^2+4\omega \nu}\right)}}.
\end{equation}

\begin{proof}
The lower bound was established in Equation \eqref{lim} of Theorem \ref{Teo_Prob_mal_Planteado}. In what follows, the corresponding upper bound is derived. Lemma \ref{lemma1}, applied to the absolute value of \eqref{Lambda} is used to obtain,
\begin{equation*}
\begin{split}
& \left\vert \Lambda(\xi)\right\vert 
= \dfrac{\left|\nu + (i\xi)^\alpha\right|}{\left\vert 1-e^{-\frac{x_0}{2\omega} \left(-\beta+\sqrt{\beta^2+4\omega[\nu+(i\xi)^\alpha]}\right)}\right\vert}
\\ &\le 
\dfrac{\nu+|\xi|^{\alpha}}{\left\vert 1-e^{-\frac{x_0}{2\omega} \left(-\beta+\sqrt{\beta^2+4\omega[\nu+\left|\xi\right|^{\alpha}\left(\cos\left(\frac{\alpha \,\pi}{2}\right) \pm i \sin\left(\frac{\alpha \,\pi}{2}\right) \right)]}\right)}\right\vert}
\\ & = 
\dfrac{\nu+|\xi|^{\alpha}}{\left\vert 1-e^{-\frac{x_0}{2\omega} \left(-\beta+\sqrt{\beta^2+4\omega\nu+4\omega\left|\xi\right|^{\alpha}\cos\left(\frac{\alpha \,\pi}{2}\right) \pm 4\omega\left|\xi\right|^{\alpha} i \sin\left(\frac{\alpha \,\pi}{2}\right) }\right)}\right\vert}
\\ & \le 
\dfrac{\nu+|\xi|^{\alpha}}{ 1-e^{-\frac{x_0}{2\omega} \left(-\beta+ \Re \left(\sqrt{\beta^2+4\omega\nu+4\omega\left|\xi\right|^{\alpha}\cos\left(\frac{\alpha \,\pi}{2}\right) \pm 4\omega\left|\xi\right|^{\alpha} i \sin\left(\frac{\alpha \,\pi}{2}\right) }\right)\right)}}
\\ & \le  \dfrac{\nu+|\xi|^{\alpha}}{ 1-e^{-\frac{x_0}{2\omega} \left(-\beta+ \sqrt{\Re \left(\beta^2+4\omega\nu+4\omega\left|\xi\right|^{\alpha}\cos\left(\frac{\alpha \,\pi}{2}\right) \pm 4\omega\left|\xi\right|^{\alpha} i \sin\left(\frac{\alpha \,\pi}{2}\right)\right)}\right)}}
\\ & =  \dfrac{\nu+|\xi|^{\alpha}}{ 1-e^{-\frac{x_0}{2\omega} \left(-\beta+ \sqrt{\beta^2+4\omega\nu+4\omega\left|\xi\right|^{\alpha}\cos\left(\frac{\alpha \,\pi}{2}\right)}\right)}}
\\ & = \dfrac{\nu+|\xi|^{\alpha}}{1-e^{-\frac{x_0}{2\omega} \left(-\beta+ \sqrt{\beta^2+4\omega \left(\nu+\left|\xi\right|^{\alpha}\cos\left(\frac{\alpha \,\pi}{2}\right)\right)}\right)}}.
\end{split}
\end{equation*}

Therefore, it follows that
\begin{equation}
\label{D_ineq}
 \left\vert \Lambda(\xi)\right\vert \le \dfrac{\nu+|\xi|^{\alpha}}{ 1-e^{-\frac{x_0}{2\omega} \left(-\beta+ \sqrt{\beta^2+4\omega \nu}\right)}}
,
\end{equation}

which completes the proof.

\end{proof}
\end{prpstn}

\section{Regularization operators}

When an inverse problem is ill-posed, a regularization method is typically employed to stabilize the solution. In this section, we propose three regularization operators for comparative purposes. We demonstrate the existence of regularization parameters that lead to three convergent methods and address basic theoretical issues related to regularization operators. Readers unfamiliar with this topic may find additional information in \cite{Engl96, Kirsch11}.

\subsection{Regularization solutions}

To stabilize the ill-posed problem, we utilize regularization operators.

\begin{dfntn}
Let $\mathbb{X}$ and $\mathbb{Y}$ be Hilbert spaces, and let $T : \mathbb{Y} \longrightarrow \mathbb{X}$ a linear unbounded operator. A regularization strategy for $T$ is defined as a family of linear bounded operators that satisfies
\begin{equation*}
\label{DefinitionR}
R_{\mu} : \mathbb{Y} \longrightarrow \mathbb{X}, \quad \mu>0, \quad / \lim_{\mu \to 0^+} R_{\mu} y = Ty, \quad \forall y \in \mathbb{Y}.
\end{equation*}
\end{dfntn}

Since the inverse operator \eqref{Lambda} has a functional form similar to that obtained for the parabolic problem studied in \cite{Umbricht24}, we propose using the same three uniparametric families of linear operators, i.e.,
$R_\mu ^i: L^2(\R) \to L^2(\R)$ with $\mu \in \R^{+}$ and $i=1,2,3$; such that
\begin{equation}
\label{familyR}
(R_{\mu}^{1} \, \widehat{y})(\xi) := \dfrac{\Lambda (\xi)}{1+\mu^2 \, \xi^2} \, \widehat{y}(\xi) , \qquad
(R_{\mu}^{2} \, \widehat{y})(\xi) := \dfrac{\Lambda (\xi)}{1+\mu^2 \, \xi^4} \, \widehat{y}(\xi) , \qquad
(R_{\mu}^{3} \, \widehat{y})(\xi) := \dfrac{\Lambda (\xi)}{e^{\mu^2 \,\xi^2/4}} \, \widehat{y}(\xi) ,
\end{equation}
where $\Lambda (\xi)$ is defined in \eqref{Lambda}, $R_{\mu}^{i}$ with $i=1,2,3$ represents regularization strategies for $\Lambda (\xi)$, and $\mu$ is the regularization parameter.

\begin{nt}
Note that the denominators in the expressions \eqref{familyR}, were introduced to ensure that the resulting linear operators $R_{\mu}^{i}$ (for $i=1,2,3$) stabilize the solution for the source.
\end{nt}

\begin{thrm}  [Convergent regularization operators] 
Consider the source identification problem \eqref{transpeqn}.
Let $u(x,\cdot), f(\cdot) \in L^2(\R) $ satisfy the following system
\begin{equation}
\label{illpp}
\begin{cases}
\partial_{0^+}^{\alpha} u(x,t)=\omega \,u_{xx}(x,t)-\beta \, u_{x}(x,t)-\nu \, u(x,t)+f(t), & \quad  x>0, \,\,\, t>0, \\
u(x,t)=0,  & \quad x \geq 0, \,\,\, t=0, \\
u(x,t)=0, & \quad x =0, \,\,\, t\geq0, \\
u(x,t)\big|_{x\rightarrow \infty} \text {bounded},  & \quad t \geq 0.
\end{cases}
\end{equation}
and let $\{R_{\mu}^{i}\}$ with $i=1,2,3$ denote the families of operators defined in \eqref{familyR}.
Then, for all $y(t)=u(x_0,t)$ there exists an a priori parameter choice rule for $\mu>0$ such that the pairs $(R_{\mu}^ {i},\mu)$ for $i=1,2,3$ constitute convergent regularization methods for the identification problem \eqref{illpp}.
\end{thrm}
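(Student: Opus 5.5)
The plan is the standard bias/noise splitting in the Fourier variable, with Plancherel to move between $t$ and $\xi$. For each $i=1,2,3$ write the regularized multiplier as $\Lambda(\xi)\,\rho_\mu^i(\xi)$, where
\[
\rho_\mu^1(\xi)=\frac{1}{1+\mu^2\xi^2},\qquad \rho_\mu^2(\xi)=\frac{1}{1+\mu^2\xi^4},\qquad \rho_\mu^3(\xi)=e^{-\mu^2\xi^2/4}.
\]
Each satisfies $0<\rho_\mu^i\le 1$ and $\rho_\mu^i(\xi)\to1$ pointwise as $\mu\to0^+$. Since the Fourier transform is unitary on $L^2(\R)$, the triangle inequality gives
\[
\|f-R_\mu^i y_\delta\|_{L^2(\R)}\le \big\|(1-\rho_\mu^i)\Lambda\widehat{y}\big\|_{L^2(\R)}+\big\|\rho_\mu^i\Lambda(\widehat{y}-\widehat{y_\delta})\big\|_{L^2(\R)}.
\]
Here I use $\widehat f=\Lambda\widehat y$ from Theorem \ref{teorema_Fuente_Inversa}, and I write $R_\mu^i y_\delta$ for the inverse transform of $(R_\mu^i\widehat{y_\delta})$.

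\textbf{Boundedness of $R_\mu^i$ and the noise term.} By the upper bound \eqref{D_ineq} of the Proposition, $|\Lambda(\xi)|\le C_0(\nu+|\xi|^\alpha)$ with $C_0=\big(1-e^{-\frac{x_0}{2\omega}(-\beta+\sqrt{\beta^2+4\omega\nu})}\big)^{-1}$. This constant is finite because $\nu>0$ forces $-\beta+\sqrt{\beta^2+4\omega\nu}>0$. Hence
\[
\sup_\xi |\Lambda\rho_\mu^i|\le C_0\Big(\nu+\sup_\xi |\xi|^\alpha\rho_\mu^i(\xi)\Big).
\]
An elementary maximization of $s\mapsto s^\alpha\rho_\mu^i(s)$ bounds the last supremum as follows:
\begin{itemize}
\item For $i=1$, $|\xi|^\alpha/(1+\mu^2\xi^2)\le \mu^{-\alpha}$. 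Split into $|\xi|\le1/\mu$ and $|\xi|>1/\mu$, or use $1+\mu^2\xi^2\ge(\mu|\xi|)^\alpha$ since $\alpha\le 2$.
\item For $i=2$, $|\xi|^\alpha/(1+\mu^2\xi^4)\le \mu^{-\alpha/2}$, by the same argument since $\alpha\le 4$.
\item For $i=3$, $|\xi|^\alpha e^{-\mu^2\xi^2/4}\le (2\alpha/e)^{\alpha/2}\mu^{-\alpha}$.
\end{itemize}
So each $R_\mu^i$ is a bounded linear operator on $L^2(\R)$ for fixed $\mu>0$. The noise term is then at most $C_0(\nu+c_i\mu^{-\gamma_i})\,\delta$, with $\gamma_1=\gamma_3=\alpha$ and $\gamma_2=\alpha/2$.

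\textbf{Bias term.} Since $\Lambda\widehat y=\widehat f\in L^2(\R)$, we have $|(1-\rho_\mu^i)\Lambda\widehat y|^2\le|\widehat f|^2$, and this integrand tends to $0$ pointwise as $\mu\to0^+$. Dominated convergence then gives $\|(1-\rho_\mu^i)\widehat f\|_{L^2(\R)}\to0$. This step also shows $R_\mu^i y\to \Lambda y$ for every admissible $y$, so each family is a regularization strategy in the sense of the Definition. No rate is needed, because only convergence is claimed.

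\textbf{Parameter choice.} Choose $\mu=\mu(\delta)$ so that $\mu(\delta)\to0$ and $\delta\,\mu(\delta)^{-\gamma_i}\to0$ as $\delta\to0$. A concrete rule is $\mu(\delta)=\delta^{1/(2\gamma_i)}$, which gives a noise term of order $\delta^{1/2}+\nu\delta$. Both terms then vanish, which yields convergence of $(R_\mu^i,\mu)$.

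The main obstacle I expect is the noise amplification bound, namely getting a clean sup bound on $|\Lambda\rho_\mu^i|$ uniformly in $\xi$. The difficulty is controlling the denominator $1-e^{-h(\xi)x_0}$ away from zero; the Proposition handles this, and it is exactly where $\nu>0$ is needed. Without \eqref{D_ineq}, I would fall back on a low-frequency and high-frequency split, using $\Re h(\xi)\ge h(0)>0$.
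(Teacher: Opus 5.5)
Your proof is correct, but it takes a more explicit route than the paper.

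\textbf{The paper's argument.} It is abstract. It observes that each multiplier $R_\mu^i$ is continuous with a finite limit at infinity, hence bounded, and that $(R^i_\mu\widehat y)(\xi)\to(\Lambda\widehat y)(\xi)$ pointwise in $\xi$. It then invokes Proposition~3.4 of Engl, Hanke and Neubauer to obtain the existence of some a priori rule. A concrete rule is deferred to Theorem~\ref{boundestimate}.

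\textbf{Your argument.} You essentially reprove that proposition in this concrete setting.
\begin{itemize}
\item You split the error into bias and noise terms.
\item You derive explicit operator-norm bounds of order $\nu+\mu^{-\alpha}$ (order $\nu+\mu^{-\alpha/2}$ for $R^2_\mu$) from \eqref{D_ineq}. These are exactly the sharp intermediate bounds in Lemma~\ref{lemma3}, before the paper weakens them to $\mu^{-2}$.
\item You give an explicit rule $\mu(\delta)=\delta^{1/(2\gamma_i)}$, which guarantees $\mu\to0$ and $\delta\mu^{-\gamma_i}\to0$.
\end{itemize}

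\textbf{A point the paper leaves implicit.} Your dominated-convergence step is needed. The Definition of a regularization strategy requires $R^i_\mu y\to\Lambda y$ in $L^2(\R)$, not merely pointwise in $\xi$. Domination by $|\widehat f|^2$ is what upgrades the pointwise limit to norm convergence.

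\textbf{What each route buys.} The paper's route is short and points ahead to the rate result. Yours is self-contained and gives an explicit a priori rule that needs only $f\in L^2(\R)$, with no $H^p$ smoothness assumption. The cost is that you get no convergence rate. That rate is exactly what Theorem~\ref{boundestimate} adds, under the extra assumption that $f$ is bounded in $H^p(\R)$.
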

%%%%%
%
\begin{proof}
The operators $\{R_{\mu}^{i}\}$ with $i=1,2,3$ defined in \eqref{familyR} are continuous for all $\xi \in \R$ and are bounded, that is,
\begin{equation*}
\begin{split}
0 \leq \lim_{|\xi| \to \infty} \left|R_{\mu}^{i}\right|<\infty, \qquad i=1,2,3.
\end{split}
\end{equation*}

Therefore, for each parameter $\mu>0$, the operators $R_{\mu}^{i}$ with $i=1,2,3$ are linear, continuous and satisfy the following pointwise convergence:
\begin{equation*}
\lim _{\mu \to 0^+} (R_{\mu}^{i} \, \widehat{y})(\xi) = (\Lambda \, \widehat{y}) (\xi), \qquad  i=1,2,3,
\end{equation*}
where $ \widehat{y} \in L^{2}(\R)$, then $R_{\mu}^{i}$ with $i=1,2,3$ are regularization strategies for $ \Lambda$. Therefore, according to Proposition~3.4 in~\cite{Engl96}, there exist \emph{a priori} parameter choice rules $\mu$ such that $(R_{\mu}^{i}, \mu)$ with $i=1,2,3$ constitute convergent regularization methods (see Definition~3.1 in~\cite{Engl96}) for solving \eqref {illf}. Theorem \ref{boundestimate} will provide a rule of choice for the regularization parameter.
\end{proof}

The regularized solution of the inverse source identification problem in frequency space, is given by
\begin{equation}
\label{ffregtransf}
\widehat{f}_{\delta ,\mu}^{i} (\xi) = (R_{\mu}^{i}  \, \widehat{y}_\delta) (\xi), \qquad i=1,2,3.
\end{equation}

\subsection{Error Analysis}

In this section, we will derive a bound for the error associated with each estimation.

\subsubsection{Auxiliary results}

To analyze the behavior of the proposed regularizations, we first introduce some results that will be used later to obtain an error bound between the source $f(t)$
 and each estimate $f_{\delta ,\mu } ^{i}(t).$

%\begin{rmrk} 
%Some of the auxiliary results may seem trivial to the reader. However, to ensure a complete and self-contained work, a brief demonstration is provided for each of them.
%\end{rmrk}

\begin{lmm}
\label{lemma3}
Let $\rho \in \R^+$ and $0<\alpha\leq1$ . If $0<\mu<1 $ the following inequalities hold
\end{lmm}
\begin{equation*}
\dfrac{\rho^\alpha}{1+\rho^2\mu^2} < \dfrac{n_1}{\mu^2},  \qquad  \dfrac{\rho^\alpha}{1+\rho^4\mu^2} < \dfrac{n_2}{\mu^2}, 
\qquad  \dfrac{\rho^\alpha}{e^{\rho^2\mu^2/4}} < \dfrac{n_3}{\mu^2},
\end{equation*}
where $n_1=n_1(\alpha)$, $n_2=n_2(\alpha)$ and $n_3=n_3(\alpha)$, are
\begin{equation}
\label{n's}
n_1=\dfrac{2-\alpha}{2} \left(\dfrac{\alpha}{2-\alpha}\right)^{\alpha/2},  \,  n_2=\dfrac{4-\alpha}{4} \left(\dfrac{\alpha}{4-\alpha}\right)^{\alpha/4}, \, n_3=\dfrac{(2 \alpha)^{\alpha/2}}{e^{\alpha/2}}.
\end{equation}
\begin{proof}
Let us defined: $\theta_1,\theta_2,\theta_3:\R^+\rightarrow\R$, such that
\begin{equation*}
\theta_1(\rho)=\dfrac{\rho^\alpha}{1+\rho^2\mu^2},  \qquad  \theta_2(\rho)=\dfrac{\rho^\alpha}{1+\rho^4\mu^2},  \qquad
\theta_3(\rho)=\dfrac{\rho^\alpha}{e^{\rho^2\mu^2/4}}.
\end{equation*}
The derivatives of the functions $\theta_i$ $i=1,2,3$ are obtained analytically
\begin{equation*}
\begin{split}
&\theta_1'(\rho)=\dfrac{\rho^{\alpha-1}\left[\alpha+\rho^2\mu^2(\alpha-2)\right]}{(1+\rho^2\mu^2)^2},  \, \\ &\theta_2'(\rho)=\dfrac{\rho^{\alpha-1}\left[\alpha+\rho^4\mu^2(\alpha-4)\right]}{(1+\rho^4\mu^2)^2},  \, \\
&\theta_3'(\rho)=\rho^{\alpha-1}e^{-\rho^2\mu^2/4}\left(\alpha-\dfrac{\rho^2\mu^2}{2}\right).
\end{split}
\end{equation*}
Then, each of the functions $\theta_i$, for $i=1,2,3$, has a unique critical point given by:
\begin{equation*}
{\rho_1}^*=\dfrac{1}{\mu} \left(\dfrac{\alpha}{2-\alpha}\right)^{1/2}, \quad {\rho_2}^*=\dfrac{1}{\sqrt{\mu}} \left(\dfrac{\alpha}{4-\alpha}\right)^{1/4},  \quad {\rho_3}^*=\dfrac{1}{\mu} \sqrt{2\alpha}.
\end{equation*}
Since the functions $\theta_i(\rho)\geq0$ $\forall \rho \in \R^+$, $\theta_i(0^+)=0$, $\ds \lim_{\rho \to +\infty} \theta_i(\rho)=0$. It follows then that ${\rho_i}^*$ is the absolute maximum of $\theta_i(\rho)$, this implies that:
\begin{equation*}
\dfrac{\rho^\alpha}{1+\rho^2\mu^2}=\theta_1(\rho)\leq \theta_1({\rho_1}^*)=\dfrac{1}{\mu^\alpha} \dfrac{2-\alpha}{2} \left(\dfrac{\alpha}{2-\alpha}\right)^{\alpha/2}<\dfrac{1}{\mu^2} \dfrac{2-\alpha}{2} \left(\dfrac{\alpha}{2-\alpha}\right)^{\alpha/2}= \dfrac{n_1}{\mu^2}, 
\end{equation*}
\begin{equation*}
\dfrac{\rho^\alpha}{1+\rho^4\mu^2}=\theta_2(\rho)\leq \theta_2({\rho_2}^*)=\dfrac{1}{\mu^{\alpha/2}} \dfrac{4-\alpha}{4} \left(\dfrac{\alpha}{4-\alpha}\right)^{\alpha/4}<\dfrac{1}{\mu^2} \dfrac{4-\alpha}{4} \left(\dfrac{\alpha}{4-\alpha}\right)^{\alpha/4}= \dfrac{n_2}{\mu^2}, 
\end{equation*}
\begin{equation*}
\dfrac{\rho^\alpha}{e^{\rho^2\mu^2/4}}=\theta_3(\rho)\leq \theta_3({\rho_3}^*)=\dfrac{1}{\mu^{\alpha}} \left(\dfrac{2\alpha}{e}\right)^{\alpha/2} <\dfrac{1}{\mu^{2}} \left(\dfrac{2\alpha}{e}\right)^{\alpha/2}= \dfrac{n_3}{\mu^2},
\end{equation*}
which concludes the proof.
\end{proof}

\begin{lmm} 
\label{lemma4}
Let $\omega, \beta, \nu, x_0 >0$; $0<\mu <1$; $0<\alpha\leq1$  and $R_{\mu}^{i}$ with $i=1,2,3$ given by \eqref{familyR}. Hence
\begin{equation*}
\left\vert  R_{\mu}^{i}(\xi) \right\vert < \dfrac{M_i}{\mu^2}, \quad i=1,2,3,
\end{equation*}
where
\begin{equation}
\label{M's}
M_i= 2 \left(\nu+n_i\right) \max \left\{1,\dfrac{1}{N}\right\},  \quad i=1,2,3,
\end{equation}
with $n_i$ defined in \eqref{n's} for $ i=1,2,3,$ and $N$ is given by:
\begin{equation}
\label{N}
N=N(\omega,\beta,\nu,x_0)= \dfrac{x_0}{2\omega}\left(-\beta+\sqrt{\beta^2+4\omega\nu}\right)
\end{equation}
\end{lmm}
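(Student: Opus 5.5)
We prove Lemma~\ref{lemma4} by factoring each regularization multiplier and bounding the pieces separately. We write
\begin{equation*}
\left\vert R_{\mu}^{i}(\xi)\right\vert = \left\vert \Lambda(\xi)\right\vert \, \phi_i(\xi), \qquad \phi_1=\dfrac{1}{1+\mu^2\xi^2},\quad \phi_2=\dfrac{1}{1+\mu^2\xi^4},\quad \phi_3=e^{-\mu^2\xi^2/4},
\end{equation*}
and insert the upper bound \eqref{D_ineq} from the Proposition. With the notation \eqref{N}, that bound reads $|\Lambda(\xi)|\le (\nu+|\xi|^\alpha)/(1-e^{-N})$. Since $\nu>0$, we have $\sqrt{\beta^2+4\omega\nu}>\beta$, so $N>0$ and the denominator is positive. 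The estimate then splits into two independent bounds:
\begin{equation*}
\left\vert R_{\mu}^{i}(\xi)\right\vert \le \dfrac{1}{1-e^{-N}}\left(\nu\,\phi_i(\xi)+|\xi|^\alpha\phi_i(\xi)\right).
\end{equation*}

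For the bracket, we use $\phi_i\le 1$ together with $\mu<1$ to get $\nu\phi_i\le \nu<\nu/\mu^2$. For the $|\xi|^\alpha\phi_i$ term we apply Lemma~\ref{lemma3} with $\rho=|\xi|$, which gives $|\xi|^\alpha\phi_i(\xi)<n_i/\mu^2$. The point $\xi=0$ is trivial, since the term vanishes there. The bracket is therefore strictly less than $(\nu+n_i)/\mu^2$.

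The remaining step, which is the only delicate one, is to show
\begin{equation*}
\dfrac{1}{1-e^{-N}}\le 2\max\left\{1,\dfrac{1}{N}\right\}.
\end{equation*}
We split into cases. If $N\ge 1$, then $e^{-N}\le e^{-1}<1/2$, so $1/(1-e^{-N})<2$. If $0<N<1$, we use the elementary inequality $1-e^{-N}\ge N-N^2/2\ge N/2$, which gives $1/(1-e^{-N})\le 2/N$. An alternative for this case is concavity of $1-e^{-N}$ on $[0,1]$, which yields $1-e^{-N}\ge (1-e^{-1})N>N/2$.

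Combining the three bounds gives $|R_\mu^i(\xi)|<2(\nu+n_i)\max\{1,1/N\}/\mu^2=M_i/\mu^2$. One point needs checking: the Proposition's upper bound rests on $\Re\sqrt{\cdot}\ge\sqrt{\beta^2+4\omega\nu}$, which holds because $\cos(\alpha\pi/2)\ge0$ for $0<\alpha\le1$. So the bound $|\Lambda(\xi)|\le(\nu+|\xi|^\alpha)/(1-e^{-N})$ holds uniformly in $\xi$, and the lemma follows.
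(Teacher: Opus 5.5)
Your proof is correct and follows the paper's route. Both arguments insert the Proposition's bound $|\Lambda(\xi)|\le(\nu+|\xi|^\alpha)/(1-e^{-N})$, bound the $\nu$-term by $\nu/\mu^2$ and the $|\xi|^\alpha$-term via Lemma~\ref{lemma3}, and split into the cases $N\ge 1$ and $0<N<1$. The one difference is that you prove $1/(1-e^{-N})\le 2\max\{1,1/N\}$ directly by elementary estimates, whereas the paper cites the appendix Lemma~\ref{lemma2}.
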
 
%%%%%
%
\begin{proof}
The three regularization operators defined in \eqref{familyR} incorporate the inverse operator $\Lambda(\xi)$, that is bounded as specified in \eqref{D_ineq} or, equivalently, 
\begin{equation} 
\label{D_ineq2}
|\Lambda(\xi)|
\leq \dfrac{\nu+|\xi|^{\alpha}}{ 1-e^{-N}}. 
\end{equation}

\setlength{\leftskip}{0pt}
\setlength{\leftskip}{0pt}

%\vspace{-0.8cm}
% 
\emph{If $N \ge 1$}: From the inequality \eqref{D_ineq2}, using Lemma \ref{lemma3} and Lemma \ref{lemma2}, we have,
%
%\vspace{-0.8cm}

%\setlength{\leftskip}{1cm}
%
\begin{equation} \label{part1_1}
\begin{split}
\phantom{space} \left|R_{\mu}^{1}(\xi)\right| &= \left|\dfrac{\Lambda(\xi)}{1+\xi^2\mu^2}\right|
\leq \dfrac{\nu+|\xi|^{\alpha}}{ (1-e^{-N})(1+\xi^2\mu^2)} 
< 2 \,\dfrac{\nu+|\xi|^{\alpha}}{ 1+\xi^2\mu^2}\\ 
&=2 \left(\dfrac{\nu}{ 1+\xi^2\mu^2}+\dfrac{|\xi|^{\alpha}}{ 1+\xi^2\mu^2}\right)
< 2 \left(\dfrac{\nu}{\mu^2}+\dfrac{n_1}{ \mu^2}\right) = \frac{2}{\mu^2} \left(\nu+n_1\right)
\end{split}
\end{equation}
\begin{equation} \label{part1_2}
\begin{split}
\phantom{space} \left|R_{\mu}^{2}(\xi)\right| &= \left|\dfrac{\Lambda(\xi)}{1+\xi^4\mu^2}\right|
\leq \dfrac{\nu+|\xi|^{\alpha}}{ (1-e^{-N})(1+\xi^4\mu^2)} 
< 2 \,\dfrac{\nu+|\xi|^{\alpha}}{ 1+\xi^4\mu^2}\\ 
&=2 \left(\dfrac{\nu}{ 1+\xi^4\mu^2}+\dfrac{|\xi|^{\alpha}}{ 1+\xi^4\mu^2}\right)
< 2 \left(\dfrac{\nu}{\mu^2}+\dfrac{n_2}{ \mu^2}\right) = \frac{2}{\mu^2} \left(\nu+n_2\right)
\end{split}
\end{equation}
\begin{equation} \label{part1_3}
\begin{split}
\phantom{space} \left|R_{\mu}^{3}(\xi)\right| &= \left|\dfrac{\Lambda(\xi)}{e^{\xi^2\mu^2/4}}\right|
\leq \dfrac{\nu+|\xi|^{\alpha}}{ (1-e^{-N})(e^{\xi^2\mu^2/4})} 
< 2 \,\dfrac{\nu+|\xi|^{\alpha}}{e^{\xi^2\mu^2/4}}\\ 
&=2 \left(\dfrac{\nu}{e^{\xi^2\mu^2/4}}+\dfrac{|\xi|^{\alpha}}{e^{\xi^2\mu^2/4}}\right)
< 2 \left(\dfrac{\nu}{\mu^2}+\dfrac{n_3}{ \mu^2}\right) = \frac{2}{\mu^2} \left(\nu+n_3\right)
\end{split}
\end{equation}

\setlength{\leftskip}{0pt}
\setlength{\leftskip}{0pt}

%\vspace{-0.8cm}
% 
\emph{If $N \in (0,1)$}: From the inequality \eqref{D_ineq2}, using Lemma \ref{lemma3} and Lemma \ref{lemma2}, we have,

%
%\vspace{-0.8cm}

%\setlength{\leftskip}{1cm}
%
\begin{equation} \label{part2_1}
\begin{split}
\phantom{space} \left|R_{\mu}^{1}(\xi)\right| &= \left|\dfrac{\Lambda(\xi)}{1+\xi^2\mu^2}\right|
\leq \dfrac{(\nu+|\xi|^{\alpha})N}{ (1-e^{-N})(1+\xi^2\mu^2) N} 
< 2 \,\dfrac{\nu+|\xi|^{\alpha}}{(1+\xi^2\mu^2)N}\\ 
&=\dfrac{2}{N} \left(\dfrac{\nu}{ 1+\xi^2\mu^2}+\dfrac{|\xi|^{\alpha}}{ 1+\xi^2\mu^2}\right)
< \dfrac{2}{N} \left(\dfrac{\nu}{\mu^2}+\dfrac{n_1}{ \mu^2}\right) = \frac{2}{N \, \mu^2} \left(\nu+n_1\right)
\end{split}
\end{equation}
\begin{equation} \label{part2_2}
\begin{split}
\phantom{space} \left|R_{\mu}^{2}(\xi)\right| &= \left|\dfrac{\Lambda(\xi)}{1+\xi^4\mu^2}\right|
\leq \dfrac{(\nu+|\xi|^{\alpha})N}{ (1-e^{-N})(1+\xi^4\mu^2)N} 
< 2 \,\dfrac{\nu+|\xi|^{\alpha}}{(1+\xi^4\mu^2)N}\\ 
&=\dfrac{2}{N} \left(\dfrac{\nu}{ 1+\xi^4\mu^2}+\dfrac{|\xi|^{\alpha}}{ 1+\xi^4\mu^2}\right)
< \dfrac{2}{N} \left(\dfrac{\nu}{\mu^2}+\dfrac{n_2}{ \mu^2}\right) = \frac{2}{N \, \mu^2} \left(\nu+n_2\right)
\end{split}
\end{equation}
\begin{equation} \label{part2_3}
\begin{split}
\phantom{space} \left|R_{\mu}^{3}(\xi)\right| &= \left|\dfrac{\Lambda(\xi)}{e^{\xi^2\mu^2/4}}\right|
\leq \dfrac{(\nu+|\xi|^{\alpha})N}{ (1-e^{-N})(e^{\xi^2\mu^2/4})N} 
< 2 \,\dfrac{\nu+|\xi|^{\alpha}}{N \, e^{\xi^2\mu^2/4}}\\ 
&=\dfrac{2}{N} \left(\dfrac{\nu}{e^{\xi^2\mu^2/4}}+\dfrac{|\xi|^{\alpha}}{e^{\xi^2\mu^2/4}}\right)
< \dfrac{2}{N} \left(\dfrac{\nu}{\mu^2}+\dfrac{n_3}{ \mu^2}\right) = \frac{2}{N\, \mu^2} \left(\nu+n_3\right)
\end{split}
\end{equation}

To finish the proof of the Lemma \ref{lemma4}, all that remains is to combine the expressions \eqref{part1_1}-\eqref{part2_3}.
\end{proof}

\begin{lmm} 
\label{lemma5}
Let $\xi \in \R$, $0<p<\infty$ and $0<\mu<1$, then the following inequality holds
\begin{equation}
\label{cota_gral}
\sup_{\xi \in \R} \left\vert (1+ \xi^2)^{-p/2} \left(1-\dfrac{R_{\mu}^{i}(\xi)}{\Lambda(\xi)}\right)\right\vert \le \max \left\{\mu^p,\mu^2, \mu^{p-2}\right\}, \quad i=1,2,3,
\end{equation}
where $\Lambda(\xi)$ and $R_{\mu}^{i}(\xi)$ are given in \eqref{Lambda} and \eqref{familyR} respectively.
\end{lmm}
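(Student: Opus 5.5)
Since $R_\mu^i(\xi)=\Lambda(\xi)\,r_i(\xi)$ with $r_1=(1+\mu^2\xi^2)^{-1}$, $r_2=(1+\mu^2\xi^4)^{-1}$, $r_3=e^{-\mu^2\xi^2/4}$, the quotient $R_\mu^i/\Lambda$ does not depend on $\Lambda$ at all. The statement therefore reduces to three scalar estimates:
\begin{equation*}
\sup_{\xi\in\R}(1+\xi^2)^{-p/2}\,\bigl(1-r_i(\xi)\bigr)\le \max\{\mu^p,\mu^2,\mu^{p-2}\},\qquad i=1,2,3.
\end{equation*}
Each $1-r_i\in[0,1)$ and is even in $\xi$, so I would take $\xi\ge 0$. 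The plan is to split the frequency line at a $\mu$-dependent threshold and use two elementary bounds for $1-r_i$.

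\textbf{Large frequencies.} For $\xi\ge 1/\mu$ I would use $1-r_i\le 1$ and $(1+\xi^2)^{-p/2}\le \xi^{-p}\le \mu^p$. This gives the term $\mu^p$.

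\textbf{Small frequencies.} For $\xi<1/\mu$ I would bound $1-r_i$ by a polynomial in $\mu\xi$.
\begin{itemize}
\item For $i=1$: $1-r_1=\mu^2\xi^2/(1+\mu^2\xi^2)\le \mu^2\xi^2$.
\item For $i=3$: $1-e^{-s}\le s$ gives $1-r_3\le \mu^2\xi^2/4\le\mu^2\xi^2$.
\item For $i=2$: $1-r_2\le\mu^2\xi^4$. This is the awkward case, and I would come back to it below.
\end{itemize}
For $i=1,3$ I then need to bound $\mu^2\xi^2(1+\xi^2)^{-p/2}$ on $[0,1/\mu)$.
\begin{itemize}
\item If $p\ge 2$: $\xi^2(1+\xi^2)^{-p/2}\le 1$, so the bound is $\mu^2$.
\item If $p<2$: $\xi^2(1+\xi^2)^{-p/2}\le \xi^{2-p}\le \mu^{p-2}$ on this range, so the bound is $\mu^2\mu^{p-2}=\mu^p$.
\end{itemize}
Combining the two regions gives the bound $\max\{\mu^p,\mu^2\}$, which is even slightly better than the stated one for these two operators.

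\textbf{The case $i=2$, which I expect to be the main obstacle.} Here the natural cutoff is $\xi=\mu^{-1/2}$ rather than $1/\mu$. The factor $\mu^{p-2}$ in the statement is exactly what lets me use the cutoff $1/\mu$ anyway. I would split at $\xi=1$.
\begin{itemize}
\item For $\xi\le 1$: $1-r_2\le\mu^2\xi^4\le\mu^2$.
\item For $1\le\xi<1/\mu$, when $p\le 2$: use $1-r_2\le 1$ together with $(1+\xi^2)^{-p/2}\le 1\le \mu^{p-2}$.
\item For $1\le\xi<1/\mu$, when $p>2$: use $1-r_2\le \min\{1,\mu^2\xi^4\}$ and check that $\min\{1,\mu^2\xi^4\}\,\xi^{-p}\le\max\{\mu^2,\mu^{p/2}\}\le\max\{\mu^2,\mu^p,\mu^{p-2}\}$.
\end{itemize}
In the last item, the inequality $\mu^{p/2}\le\max\{\mu^2,\mu^{p-2}\}$ holds because $p/2$ lies between $2$ and $p-2$ when $p\ge4$, and $p/2\ge 2$ fails only when $p<4$, where $\mu^{p/2}\le\mu^{p-2}$ since $p/2\ge p-2$.

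This exponent bookkeeping over the ranges of $p$ is the delicate step, and I would verify it case by case: $p\le2$, $2<p\le4$, and $p>4$. Taking the maximum over all regions and all $i$ gives \eqref{cota_gral}.
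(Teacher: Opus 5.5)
Your argument is correct and is essentially the paper's proof. It uses the same regions $|\xi|\ge 1/\mu$, $|\xi|<1$, $1\le|\xi|<1/\mu$, the same bounds ($1-r_i\le 1$ at high frequency, $1-r_i\le\mu^2\xi^2$ or $\mu^2\xi^4$ below), and then compares powers of $|\xi|$ and $\mu$. The only deviation is for $i=2$ on $1\le|\xi|<1/\mu$: the paper simply bounds $\mu^2|\xi|^{4-p}$ by its endpoint value $\mu^{p-2}$ when $p\le 4$ (and by $\mu^2$ when $p>4$), while your $\min\{1,\mu^2\xi^4\}$ device gives the sharper $\max\{\mu^2,\mu^{p/2}\}$ at the cost of the extra exponent comparison.
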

%%%%%
\begin{proof}
Let
\begin{equation*}
\Omega_i(\xi):=(1+ \xi^2)^{-p/2} \left(1-\dfrac{R_{\mu}^{i}(\xi)}{\Lambda(\xi)}\right), \quad i=1,2,3.
\end{equation*}
Three cases are considered for the proof.

\vspace{0.5cm}

\textbf{Case 1} $ (|\xi| \geq |\xi_0|:=\frac{1}{\mu})$. Thus
%\vspace{-0.5cm}

		    \begin{equation} 
        \label{caso1}
        \Omega_i(\xi) \leq (1+ |\xi|^2)^{-p/2} \leq |\xi|^{-p} \leq |\xi_0|^{-p}= \mu^{p}, \quad i=1,2,3.
        \end{equation}

\vspace{0.5cm}		
				
 \textbf{Case 2} $(|\xi|   < 1)$. Thus 

%\vspace{-0.5cm}
		
        \begin{equation} 
        \label{caso2_1}
        \Omega_1(\xi)= \dfrac{\xi^2 \mu^2}{1+\xi^2 \mu^2} (1+ \xi^2)^{-p/2}\leq \xi^2 \mu^2 (1+ \xi^2)^{-p/2}\leq \mu^2,
        \end{equation}
				\begin{equation} 
        \label{caso2_2}
           \Omega_2(\xi)= \frac{\xi^4 \mu^2}{1+\xi^4 \mu^2} (1+ \xi^2)^{-p/2}\leq \xi^4 \mu^2 (1+ \xi^2)^{-p/2}\leq \mu^2,
        \end{equation}
				\begin{equation} 
        \label{caso2_3}
              \Omega_3(\xi)= \left(1-e^{-\frac{\xi^2 \mu^2}{4}}\right) (1+ \xi^2)^{-p/2}\leq 1-e^{\frac{-\xi^2 \mu^2}{4}}\leq \frac{\xi^2 \mu^2}{4}\leq \mu^2.
        \end{equation}
				
  \textbf{Case 3} $(1 \leq  |\xi| < |\xi_0|:=\frac{1}{\mu})$. Thus 
	
	%\vspace{-0.5cm}
				
         \begin{equation} 
         \label{caso3_1}
        \Omega_1(\xi)= \dfrac{\xi^2 \mu^2}{1+\xi^2 \mu^2} (1+ \xi^2)^{-p/2}\leq \dfrac{\left|\xi\right|^{2-p} \, \mu^2}{1+\xi^2 \mu^2}\leq \left|\xi\right|^{2-p} \, \mu^2,
         \end{equation}
				 \begin{equation} 
         \label{caso3_2}
        \Omega_2(\xi)= \dfrac{\xi^4 \mu^2}{1+\xi^4 \mu^2} (1+ \xi^2)^{-p/2}\leq \dfrac{\left|\xi\right|^{4-p} \, \mu^2}{1+\xi^4 \mu^2}\leq \left|\xi\right|^{4-p} \, \mu^2,
         \end{equation}
				\begin{equation} 
        \label{caso3_3}
              \Omega_3(\xi)= \left(1-e^{\frac{-\xi^2 \mu^2}{4}}\right) (1+ \xi^2)^{-p/2}\leq \left|\xi\right|^{-p} \dfrac{\xi^2 \mu^2}{4} \leq \left|\xi\right|^{2-p} \, \mu^2.
        \end{equation}
				 
If $0<p\leq 2,$ it results
       \begin{equation} 
         \label{subcaso1_3}
         \Omega_i(\xi)\leq \left|\xi\right|^{2-p} \, \mu^2  \leq \left|\xi_0\right|^{2-p} \, \mu^2 =\mu^p, \quad i=1,3.
         \end{equation}

If $p > 2$, it follows that 
       \begin{equation} 
         \label{subcaso2_3}
        \Omega_i(\xi)\leq \left|\xi\right|^{2-p} \, \mu^2  \leq \mu^2, \quad i=1,3.
         \end{equation}

Analogously, if $0<p\leq 4,$ it results
       \begin{equation} 
         \label{subcaso1_3bis}
         \Omega_2(\xi)\leq \left|\xi\right|^{4-p} \, \mu^2  \leq \left|\xi_0\right|^{4-p} \, \mu^2 =\mu^{p-2}.
         \end{equation}

If $p > 4$, it follows that 
       \begin{equation} 
         \label{subcaso2_3bis}
        \Omega_2(\xi)\leq \left|\xi\right|^{4-p} \, \mu^2  \leq \mu^2.
         \end{equation}

The expressions given by \eqref{caso1}-\eqref{subcaso2_3bis} are combined to obtain \eqref{cota_gral}. This concludes the proof.
\end{proof}

\begin{nt}
Notice that 
\begin{equation*} 
\lim_{\mu \to 0^+}\dfrac{R_{\mu}^{i}(\xi)}{\Lambda(\xi)}=1, \qquad i=1,2,3.
\end{equation*}
\end{nt}

\subsubsection{Analytical bound of error}

It is now possible to obtain a bound for the error estimate.

\begin{dfntn} 
The norm in the Sobolev space $H^p(\R)$, for $p>0$, is defined as: 
\begin{equation}
\label{Boundf}
\|  f\|_{H^p(\R)} := \left(\, \int\limits_{-\infty}^{\infty}{|\widehat{f}|^2 \left(1+\xi^2 \right)^{p}\, d\xi} \right)^{1/2}. 
\end{equation}
\end{dfntn}

In the literature on the stabilization of ill-posed problems, specifically in works concerning the estimation of unknown sources, different approaches to define an a priori regularization parameter selection rules can be found, which utilize a bound for the norm of the source \eqref{Boundf}. Since the source is unknown, its norm is also unknown, necessitating the assumption of a value for the bound that may not be accurate. Typically, $C=1$ is used (see, for example, \cite{Yang10b, Yang10, Yang11}), which in some way limits the range of possible sources to be determined. To address this issue, this article proposes an a priori regularization parameter selection rule based solely on the noise level in the data. The regularization parameter is normalized with respect to the maximum tolerated noise. This selection rule allows for the derivation of an analytical expression for the error bound of each estimation, as demonstrated in the following result.

\begin{thrm}[Analytical bound for the estimation error]
\label{boundestimate}
Consider the inverse problem of determining the source $f(t)$ in (\ref{transpeqn}). Let $f_{\delta ,\mu}^{i}(t)$, with $i=1,2,3$, be the regularization solutions. It is assumed that there exists $C \in \R^+$ bounding the norm of $f $ in $H^p(\R)$ for some $0<p<\infty$ \eqref{Boundf}.
Letting
\begin{equation}
\label{mureg}
    \mu^2= \left(\dfrac{\delta}{\delta_M}\right)^{2/p+2} < 1,
\end{equation} 

there exist constants $K_i \, (i=1,2,3)$ independent of $\delta$ such that,
\begin{equation*}
\|  f -f_{\delta ,\mu }^{i} \|_{L^2(\R)} \le  K_i \, \max \left\{  \left(\dfrac{\delta}{\delta_M}\right)^{2/p+2}, \left(\dfrac{\delta}{\delta_M}\right)^{p/p+2}, \left(\dfrac{\delta}{\delta_M}\right)^{p-2/p+2}\right\},  \qquad i=1,2,3.
\end{equation*}
\end{thrm}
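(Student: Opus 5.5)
The plan is the standard splitting of the total error into a data (noise) part and a regularization (approximation) part, carried out in the frequency domain. By Parseval (the Fourier transform is unitary on $L^2(\R)$),
\begin{equation*}
\| f-f^{i}_{\delta,\mu}\|_{L^2(\R)}=\|\widehat f-R^{i}_{\mu}\widehat{y_\delta}\|_{L^2(\R)}
\le \|R^{i}_{\mu}(\widehat y-\widehat{y_\delta})\|_{L^2(\R)}+\|\Lambda\widehat y-R^{i}_{\mu}\widehat y\|_{L^2(\R)},
\end{equation*}
where I used $\widehat f=\Lambda\widehat y$ from Theorem \ref{teorema_Fuente_Inversa}. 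I will bound the two terms separately and then insert the choice \eqref{mureg}.

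For the noise term, Lemma \ref{lemma4} gives $\sup_\xi|R^{i}_\mu(\xi)|<M_i/\mu^2$. Combined with \eqref{noiselevel}, this yields
\begin{equation*}
\|R^{i}_{\mu}(\widehat y-\widehat{y_\delta})\|\le \frac{M_i\,\delta}{\mu^2}.
\end{equation*}

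For the regularization term, I rewrite $\Lambda\widehat y-R^i_\mu\widehat y=(1-R^i_\mu/\Lambda)\widehat f$. I then insert the weight $(1+\xi^2)^{-p/2}(1+\xi^2)^{p/2}$ and apply Lemma \ref{lemma5} together with the a priori bound $\|f\|_{H^p}\le C$ from \eqref{Boundf}:
\begin{equation*}
\|(1-R^i_\mu/\Lambda)\widehat f\|\le \sup_\xi\left|(1+\xi^2)^{-p/2}\left(1-\frac{R^i_\mu}{\Lambda}\right)\right|\,\|f\|_{H^p}\le C\max\{\mu^p,\mu^2,\mu^{p-2}\}.
\end{equation*}
Adding the two pieces gives
\begin{equation*}
\|f-f^i_{\delta,\mu}\|\le M_i\frac{\delta}{\mu^2}+C\max\{\mu^p,\mu^2,\mu^{p-2}\}.
\end{equation*}

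Next I substitute $\mu^2=(\delta/\delta_M)^{2/(p+2)}$, interpreting the exponent in \eqref{mureg} as $2/(p+2)$, and write $\delta=\delta_M(\delta/\delta_M)$. The noise term then becomes $M_i\delta_M(\delta/\delta_M)^{p/(p+2)}$. The three regularization powers become $\mu^p=(\delta/\delta_M)^{p/(p+2)}$, $\mu^2=(\delta/\delta_M)^{2/(p+2)}$, and $\mu^{p-2}=(\delta/\delta_M)^{(p-2)/(p+2)}$. These are exactly the three exponents appearing in the statement. Every term is therefore bounded by the stated maximum, and one may take $K_i=M_i\delta_M+C$ (or $2\max\{M_i\delta_M,C\}$), which is independent of $\delta$. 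The requirement $\mu<1$ needed by Lemmas \ref{lemma4} and \ref{lemma5} follows from $\delta<\delta_M$.

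The main obstacle is the bookkeeping of exponents rather than any analytic difficulty. The first issue is the ambiguous typesetting of the exponents in \eqref{mureg} and in the statement; I will read them as $2/(p+2)$, $p/(p+2)$ and $(p-2)/(p+2)$. The second issue is that when $p<2$ the exponent $(p-2)/(p+2)$ is negative, so the bound does not tend to zero. This is harmless for the inequality itself, since the maximum still dominates every term, but it should be remarked. One must also check that Lemma \ref{lemma5}'s case split actually produces $\mu^{p-2}$ only for $R^2_\mu$. Keeping the crude common maximum for all $i$ avoids having to track these cases separately.
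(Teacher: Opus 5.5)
Your proposal is correct and follows the paper's proof essentially step for step. It uses the same triangle-inequality split into a noise term controlled by Lemma~\ref{lemma4} and an approximation term controlled by the $H^p$-weighted supremum of Lemma~\ref{lemma5}, then substitutes $\mu^2=(\delta/\delta_M)^{2/(p+2)}$ to obtain $K_i=C+\delta_M M_i$. Your side remark is also accurate: for $0<p<2$ the term $(\delta/\delta_M)^{(p-2)/(p+2)}$ blows up as $\delta\to 0$, so the inequality still holds but gives no convergence in that range, contrary to the Note that follows the theorem.
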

%%%%%
\begin{proof}
\begin{equation*}
\left\| \widehat{f}(\xi )-R_{\mu}^{i} (\xi) \, \widehat{y}(\xi ) \right\|_{L^2(\R)}
= \left\| \widehat{f}(\xi) \left(1-\frac{R_{\mu}^{i} (\xi)}{\Lambda(\xi)}\right) \frac{(1+ \xi^2)^{p/2}}{(1+ \xi^2)^{p/2}} \right\|_{L^2(\R)}, 
\end{equation*}
rearranging
\begin{equation*}
\left\| \widehat{f}(\xi )-R_{\mu}^{i} (\xi) \, \widehat{y}(\xi ) \right\|_{L^2(\R)}
\le  \sup_{\xi \in \R} \left| (1+ \xi^2)^{-p/2}\left(1-\frac{R_{\mu}^{i} (\xi)}{\Lambda(\xi)}\right)    \right| 		\left\| \widehat{f}(\xi) (1+ \xi^2)^{p/2} \right\|_{L^2(\R)}.
\end{equation*}

By using the definition of the norm in the Sobolev space $H^p(\R)$ given by the expression \eqref{Boundf}, it results, 
\begin{equation}
\label{ineq2teo2}
\left\|\widehat{f}(\xi )-R_{\mu}^{i} (\xi) \, \widehat{y}(\xi ) \right\|_{L^2(\R)}
\le \sup_{\xi \in \R} \left| (1+ \xi^2)^{-p/2}\left(1-\frac{R_{\mu}^{i} (\xi)}{\Lambda(\xi)}\right)  \right|  \left\|  f  \right\|_{H^p(\R)}.
\end{equation}

By the triangle inequality, we have
\begin{equation}
\label{dnormas}
 \left\| \widehat{f} - \widehat{f_{\delta ,\mu }^{i}} \right\|_{L^2(\R)}   \leq \left\|  \widehat{f} - R_{\mu}^{i} (\xi) \, \widehat{y}(\xi ) \right\|_{L^2(\R)}   +  \left\| R_{\mu}^{i} (\xi) \, \widehat{y}(\xi )  - \widehat{f_{\delta ,\mu }^{i}}\right\|_{L^2(\R)}.  
\end{equation}

From the inequalities (\ref{ineq2teo2})-(\ref{dnormas}), together with the definition of the regularized source \eqref{ffregtransf} in frequency space, we obtain,
\begin{equation}
\label{eq_aux_dem1}
\left\|  \widehat{f}  -\widehat{f_{\delta ,\mu }^{i}} \right\|_{L^2(\R)}
\le  \sup_{\xi \in \R} \left| (1+ \xi^2)^{-p/2}\left(1-\frac{R_{\mu}^{i}(\xi)}{\Lambda(\xi)}\right)    \right|  \left\|  f \right\|_{H^p(\R)}+\sup_{\xi \in \R} \left|R_{\mu}^{i}(\xi)\right|\left\|\widehat{y}-\widehat{y}_{\delta}\right\|_{L^2(\R)},
\end{equation}

Parseval's identity \cite{Marks09} is used in \eqref{eq_aux_dem1}, along with the fact that $C$ bounds $f$ in the norm $H^p(\R)$ and the assumption that the error in the data is bounded $(\left\|y-y_{\delta }\right\|_{L^2(\R)}=\left\|\widehat{y}-\widehat{y }_{\delta }\right\|_{L^2(\R)} \leq \delta)$. Then,
\begin{equation}
\label{eq_aux_dem2}
\left\| f -f_{\delta ,\mu }^{i} \right\|_{L^2(\R)} =\left\|  \widehat{f} -\widehat{f_{\delta ,\mu }^{i}}  \right\|_{L^2(\R)}
\le C \, \sup_{\xi \in \R} \left| (1+ \xi^2)^{-p/2}\left(1-\frac{R_{\mu}^{i}(\xi)}{\Lambda(\xi)}\right)    \right|  + \delta \, \sup_{\xi \in \R} \left|R_{\mu}^{i}(\xi)\right|.
\end{equation}  

By virtue of Lemmas \ref{lemma4}, \ref{lemma5}, the expression \eqref{eq_aux_dem2} can be rewritten as,
\begin{equation}
\label{eq_aux_dem3}
\left\| f -f_{\delta ,\mu }^{i}  \right\|_{L^2(\R)} 
< C \, \max\left\{\mu^2, \mu^p, \mu^{p-2} \right\}  + \frac{\delta}{\mu^2} \,M_i,
\end{equation}  
where $M_i$  is given by \eqref{M's} for $i=1,2,3$. Using that $\mu^2= \left(\dfrac{\delta}{\delta_M} \right)^{2/p+2}$ \eqref{mureg} the inequality \eqref{eq_aux_dem3} leads to
\begin{equation*}
\left\| f -f_{\delta ,\mu }^{i}  \right\|_{L^2(\R)}
< C \, \max\left\{\left(\dfrac{\delta}{\delta_M}\right)^{2/p+2}, \left(\dfrac{\delta}{\delta_M}\right)^{p/p+2}, \left(\dfrac{\delta}{\delta_M}\right)^{p-2/p+2}\right\}  + \delta_M  \,M_i \, \left(\dfrac{\delta}{\delta_M}\right)^{p/p+2},
\end{equation*} 
equivalently
\begin{equation}
\label{cota_final}
\left\|  f -f_{\delta ,\mu }^{i} \right\|_{L^2(\R)} <  K_i \, \max\left\{\left(\dfrac{\delta}{\delta_M}\right)^{2/p+2}, \left(\dfrac{\delta}{\delta_M}\right)^{p/p+2}, \left(\dfrac{\delta}{\delta_M}\right)^{p-2/p+2}\right\},
\end{equation}
where $K_i=C+\delta_M \,M_i$, for $i=1,2,3$. Therefore, the proof is complete.
\end{proof}

\begin{nt}
Note that the bound \eqref{cota_final} satisfies
\begin{equation*}
\| f -f_{\delta, \mu}^{i} \|_{L^2(\R)} \longrightarrow 0 \,\,\,\,\,\, \text{if} \,\,\,\,\,\, \delta \longrightarrow 0,
  \end{equation*}
which means that the estimate $f_{\delta, \mu}^{i}$ converges to the source function $f$ as the noise in the data $\delta$ tends to $0$.
\end{nt}
\begin{nt}
Note that the case $p=\infty$ is not included in the hypotheses of the Theorem \ref{boundestimate}. The reason is that, in this case,
\begin{equation*}
  \| f -f_{\delta, \mu}^{i} \|_{L^2(\R)} =\left\| \widehat{f} -\widehat{f_{\delta ,\mu }^{i}} \right\|_{L^2(\R)} < K_i < C \nrightarrow 0.
  \end{equation*}
\end{nt}
%
%\begin{nt}
%
%Note that the bound obtained for the regularization error \eqref{cota_final} is of H\"older type and depends only on the smoothness of the source and the parameters of the mathematical model.
%
%\end{nt}
%
%
%\begin{rmrk}
%A case of special interest in the literature is for $p=2$. In this case, the expression \eqref{cota_final} reduces to:
%
%\begin{equation*}
%\left\|  f -f_{\delta ,\mu }^{i} \right\|_{L^2(\R)} <  K_i \, \sqrt{\dfrac{\delta}{\delta_M}}.
%\end{equation*}
%
%\end{rmrk}

\section{Numerical examples and discussion of results}

In this section, we present numerical examples with different characteristics and discuss the results obtained.

\subsection{Numerical examples}

We present specific examples for the estimation of the source $f$ in $\R$.
For each example discussed, we select different values for the parameters $(\omega, \beta, \nu, x_0)$ of the source identification problem. In addition, to simulate noise in the data, a range of values for standard deviation $\epsilon$ is utilized. Time is discretized into a uniform mesh, and a data set ${y_{\eta_1},...,y_{\eta_N}}$ is generated from the evaluation at $x_0$ of the solution $u(x,t)$. Afterwards, noise is added, specifically:
\begin{equation*}
\label{noisydata}
y_{\eta_i} = y(t_i) + \eta_i , \quad i=1,...,N, \,\, t_i \in {\cal G},
\end{equation*}
where ${\cal G}$ represents a uniform discretization of $\R$, and $\eta_i$, for $i=1,...,N$ are independent realizations of the normally distributed random variable ${\eta}$ with mean 0, standard deviation $\epsilon$ and noise level $\delta=\delta(\epsilon)$.

%The error $\left\|y(t) -y_{\delta}(t)\right\|_{L^2(\R)}$ is numerically calculated using Simpson's integration method. This calculation dependent del ruido y de la discrtizacion ${\cal G}$ considerada 
%

Afterwards, ${\widehat{y}_{\eta_1},\ldots,\widehat{y}_{\eta_N}}$ is computed using the FFT (Fast Fourier Transform) \cite{Van92}, and the regularization solutions $f^{i}_{\delta ,\mu}$ for $i=1,2,3$ given in \eqref{ffreg2} are obtained using the inverse Fast Fourier Transform \cite{Van92}, where the regularization parameter $\mu$ is chosen according to \eqref{mureg}, that is,
\begin{equation*}
\mu^2= \ds \left(\frac{\delta}{\delta_M}\right)^{2/p+2}.
\end{equation*}

In practice, the maximum tolerance for the error in the data, $\delta_M$, given in \eqref{noiselevel}, is determined based on calibration, instrumentation errors, and measurement inaccuracies associated with the instruments used. To ensure that the regularization parameter does not exceed 1, it is defined as $\delta$ plus one unit. In other words,

\begin{equation*}
\delta_M= 1+ \delta.
\end{equation*}

The results of the estimated sources without regularization and those obtained after applying the regularization methods presented in this article are plotted, for each example. A table is also included detailing the relative errors incurred in the estimations by each of the regularization operators used to address the ill-posed problem. The table shows the errors for a set  of five data noise levels, each represented by the standard deviations ${\epsilon_1;\ldots;\epsilon_5}= {10^{-1}, 10^{-2}, 10^{-3}; 10^{-4}; 10^{-5}}$, chosen based on the range of values of the solution.

\begin{rmrk}
In some examples, the $p$ consideref for the recovery does not correspond to the space $H^p(\R)$ to which the source belongs. These examples were included to show that the recovery is reasonable even in those cases.
\end{rmrk}
\begin{rmrk}
In the numerical experiments, simulations were conducted for several values of $x_0$ (across different positions). However, no significant differences were observed in the results for varying $x_0$. Therefore, to simplify the presentation and focus on the most relevant aspects of the study, only the results for a specific value of $x_0$ are shown. 
\end{rmrk}

We analyze two examples of one-dimensional source estimation with different characteristics: one discontinuous and the other continuous.
In particular, the example with the discontinuous source does not belong to $H^p(\R)$ for $p>0$ and therefore the results of Theorem \ref{boundestimate} do not apply. Nevertheless, this example is included since, even in this case, the approximation improves when the regularization operators are used.

\begin{xmpl}
\label{example1}

\textbf{\textit{[Discontinuous source]}} For this example, the following parameters are considered: $\omega=0.1$; $\beta= 0.9$; $\nu=1$;  $\alpha=0.9$; $N=256$ and $x_0=0.5$.
The standard errors to generate the data noise $\epsilon \in \left\{0.05 \, , 0.1\, , 0.15\, , 0.2 \right\}$
and the source to estimate is:

\begin{equation}
\label{fuente_ex1}
f(t)=\begin{cases}
-1, \qquad & 0 \leq t<2.5, \\
  1, \qquad & 2.5 \leq t <5, \\
  -1, \qquad & 5 \leq t <7.5, \\
1, \qquad & 7.5 \leq t \leq 10, \\
  0, \qquad & \text{in another case}.
\end{cases}
\end{equation}
\end{xmpl}

The source described in \eqref{fuente_ex1}, retrieved in Example \ref{example1} is significant in the context of signal theory. This type of function is commonly encountered in source retrieval problems, as illustrated in \cite{Yang11,Yang14}. The relevance arises primarily from its discontinuous nature, which can lead to the Gibbs phenomenon \cite{Elden00,Van92}. This phenomenon indicates that when a Fourier series is developed for a function that is not continuous in the considered region, there may be poor precision in the neighborhoods of the discontinuities.

%\vspace{-0.3cm}
\begin{figure}[h!]
\begin{center}
\includegraphics[width=0.46\textwidth]{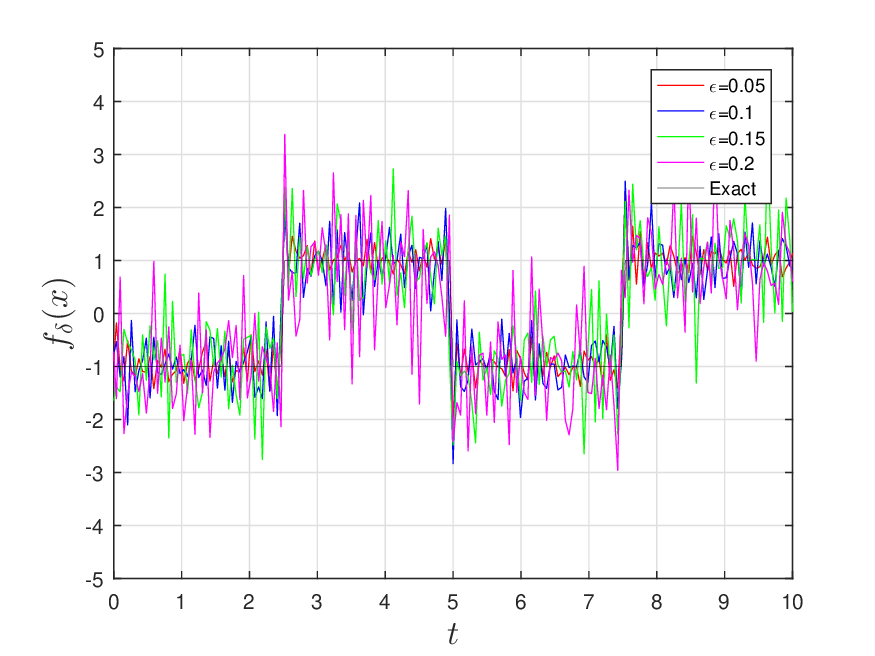}
\includegraphics[width=0.46\textwidth]{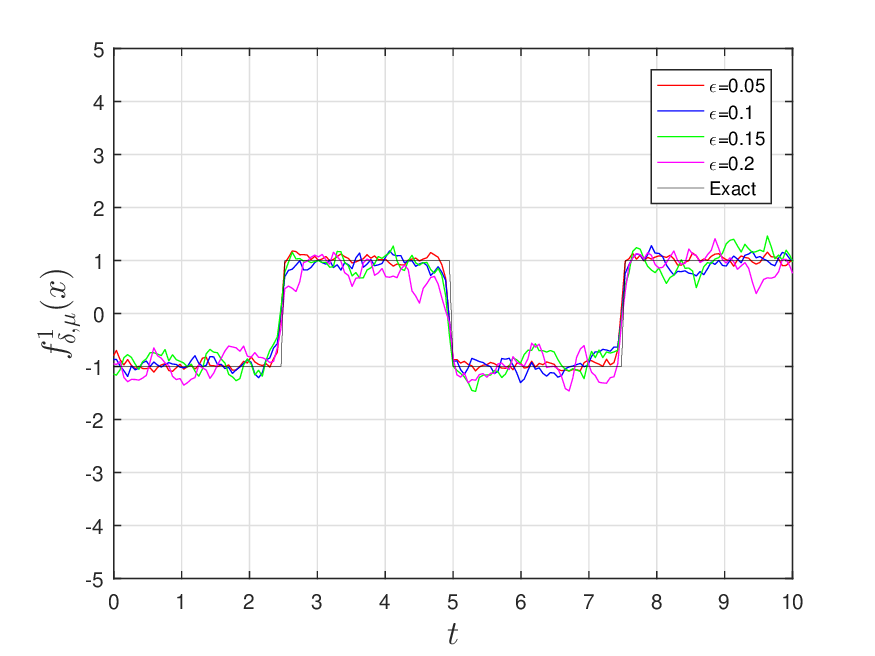}
\includegraphics[width=0.46\textwidth]{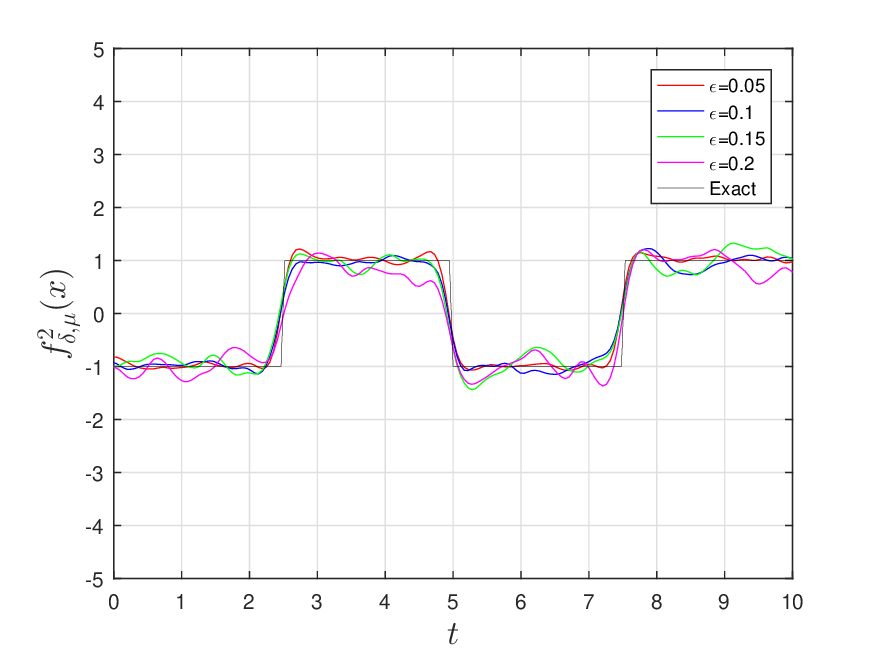}
\includegraphics[width=0.46\textwidth]{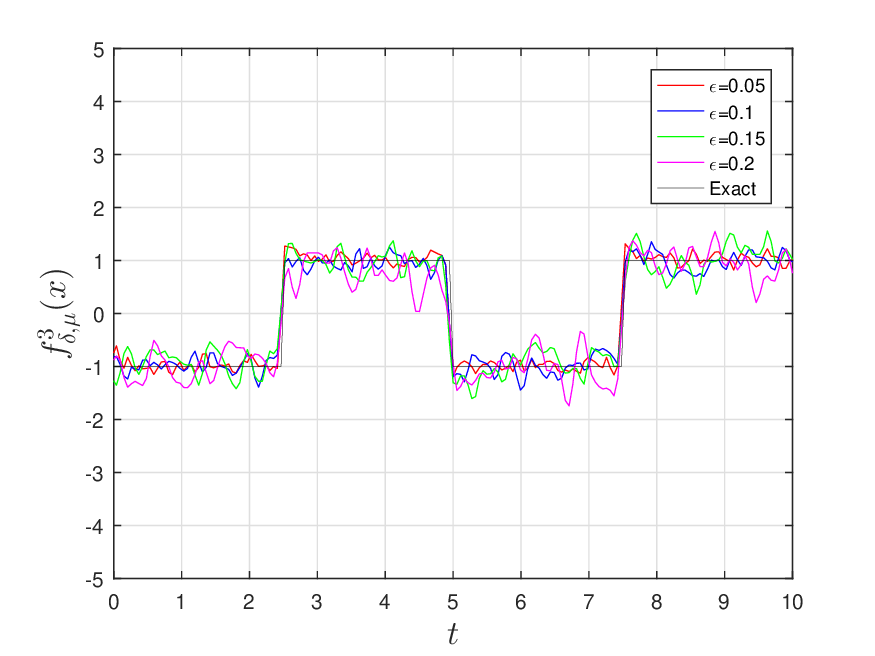}
\vspace{-0.2cm}
\caption{Example \ref{example1}: Non-regularized source with $x_0=0.5$ (top-left); Regularized sources with $x_0=0.5$ and $p=1$, using $R^{1}_\mu$ (top-right), $R^{2}_\mu$ (bottom-left), $R ^{3}_\mu$ (bottom-right) for different noise levels.}
\vspace{-0.3cm}
\label{CurvaCuadrada}
\end{center}
\end{figure}

Table \ref{tableej1} lists the relative errors for the source estimation obtained with the regularization methods used. For example, in the case of $\epsilon=10^{-5}$, the relative error for the regularized source estimation using $R^{1}_{\mu}$ is $2.11 \%$, while for $R^{2}_{\mu}$ it is $2.97 \%$, and for $R^{3}_{\mu}$ it is $3.15\%$.
%\vspace{-0.10cm}
\begin{table}[h!]
\begin{center}
{\begin{tabular}{c}\toprule
Relative errors\\
{\begin{tabular}{lccc} \toprule
\,\, $\epsilon$ &  $ \| f-f^{1}_{\delta,\mu}\|/ \left\|f\right\|$   & $ \| f-f^{2}_{\delta,\mu}\|/ \left\|f\right\|$  & $ \| f-f^{3}_{\delta,\mu}\|/ \left\|f\right\|$   \\ \midrule
$10^{-1}$ & 0.2275  & 0.2307  & 0.2591 \\
$10^{-2}$ & 0.1341  & 0.1459   & 0.1587 \\
$10^{-3}$  & 0.0849 &	0.0996  & 0.1033\\
$10^{-4}$  & 0.0478  & 0.0513  & 0.0591   \\
$10^{-5}$  & 0.0211  & 0.0297 & 0.0315 \\\bottomrule
\end{tabular}}
\end{tabular}}
\end{center}
\vspace{-0.30cm}
\caption{Example \ref{example1}: Relative estimation errors for $ x_0 = 1 $ and $ p = 1.$}
\vspace{-0.30cm}
\label{tableej1}
\end{table}

%%%%%%%%%%%%%%%%%%%%%%%%%%%%%%%%%%%%%%%%%%%%%%%%%%%%%%%%%%%%%%%%%%%%%

\begin{xmpl}
\label{example2}

\textbf{\textit{[Continuous source]}} The following parameters are considered: $\omega=0.01$; $\beta= 0.5$; $\nu=1.51$;  $\alpha=0.3$; $N=256$ and $x_0=10$.
The standard errors to generate the data noise are $\epsilon \in \left\{0.05 \, , 0.1\, , 0.15\, , 0.2 \right\}$
and the source to estimate in this case is:

\begin{equation}
\label{fuente_ex2}
f(t)=\begin{cases}
6.51 e^{-t}, \qquad & 0 \leq t\leq 10, \\
  0, \qquad & \text{in another case}.
\end{cases}
\end{equation}
\end{xmpl}

%\vspace{-0.3cm}
\begin{figure}[h!]
\begin{center}
\includegraphics[width=0.46\textwidth]{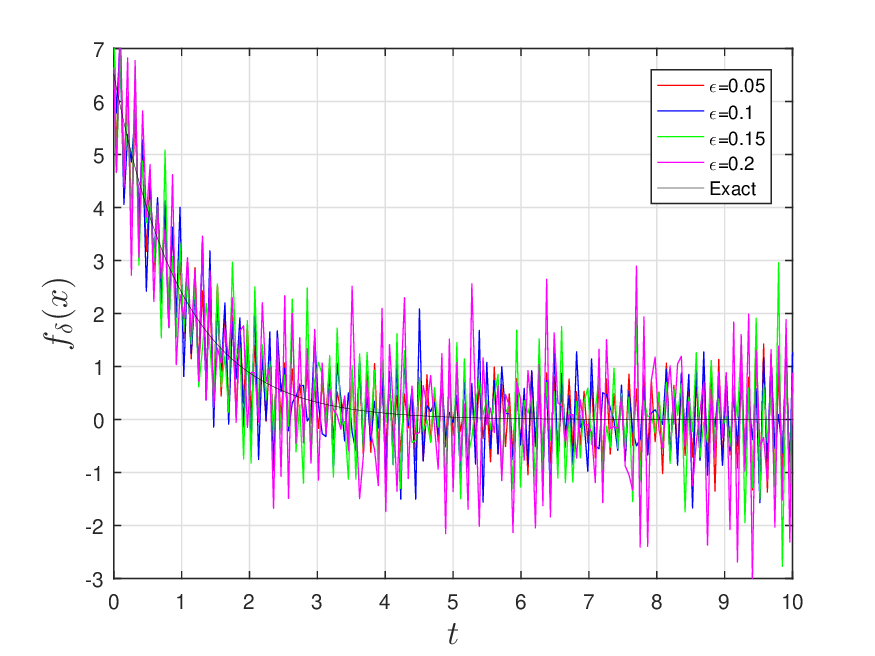}
\includegraphics[width=0.46\textwidth]{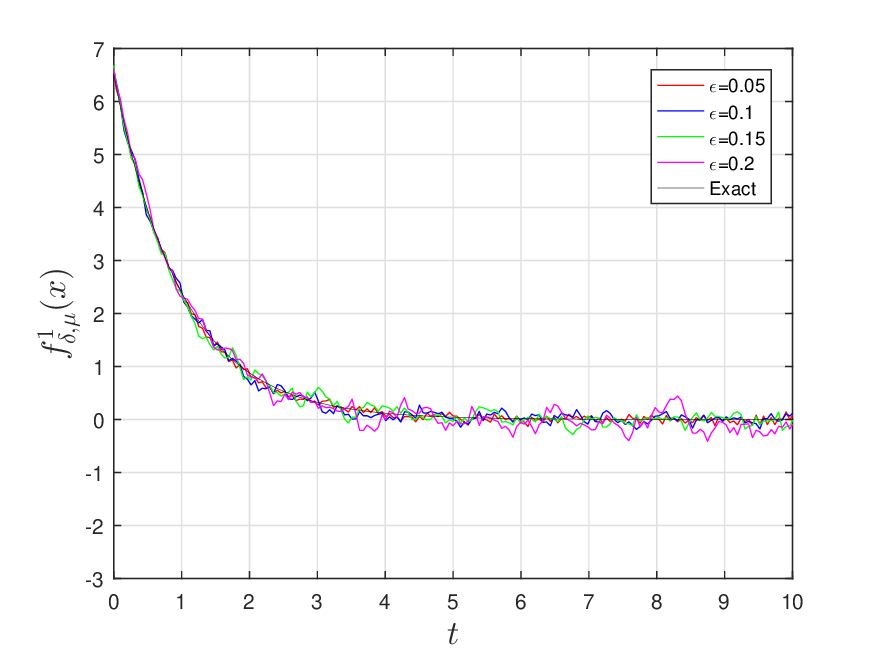}
\includegraphics[width=0.46\textwidth]{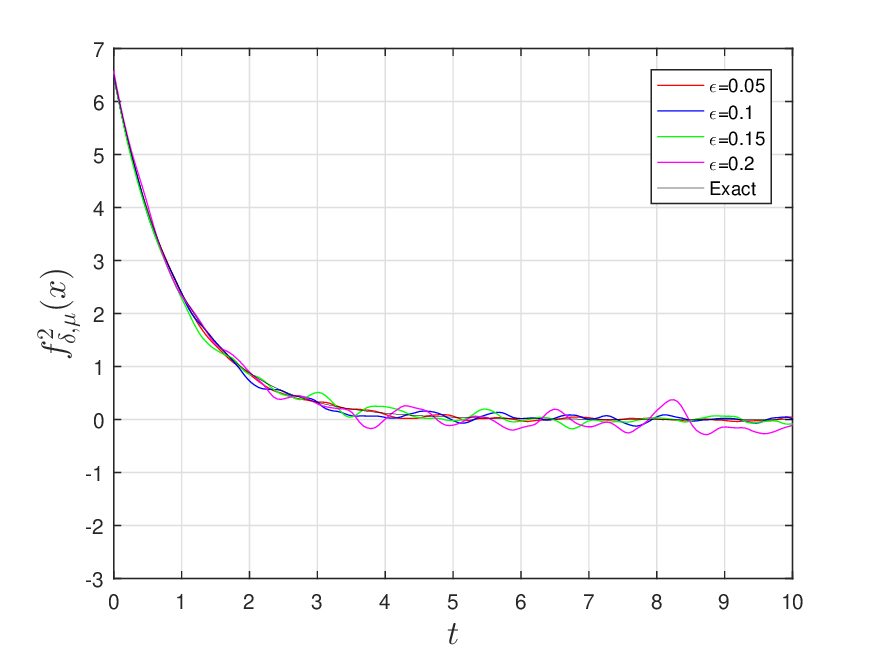}
\includegraphics[width=0.46\textwidth]{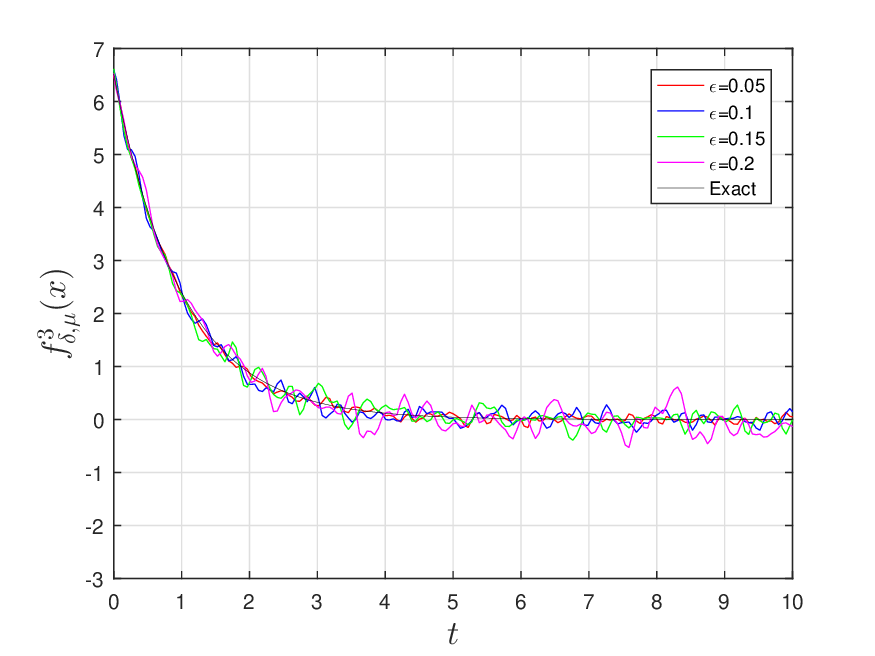}
\vspace{-0.2cm}
\caption{Example \ref{example1}: Non-regularized source with $x_0=10$ (top-left); Regularized sources with $x_0=10$ and $p=2$, using $R^{1}_\mu$ (top-right), $R^{2}_\mu$ (bottom-left), $R ^{3}_\mu$ (bottom-right) for different noise levels.}
\vspace{-0.3cm}
\label{CurvaExponencial}
\end{center}
\end{figure}

Table \ref{tableej2} lists the relative errors for the source estimation obtained with the regularization methods used. For example, in the case of $\epsilon=10^{-5}$, the relative error for the regularized source estimation using $R^{1}_{\mu}$ is $2.43 \%$, while for $R^{2}_{\mu}$ it is $2.64 \%$, and for $R^{3}_{\mu}$ it is $2.96\%$.

%\vspace{-0.10cm}
\begin{table}[h!]
\begin{center}
{\begin{tabular}{c}\toprule
Relative errors\\
{\begin{tabular}{lccc} \toprule
\,\, $\epsilon$ &  $ \| f-f^{1}_{\delta,\mu}\|/ \left\|f\right\|$   & $ \| f-f^{2}_{\delta,\mu}\|/ \left\|f\right\|$  & $ \| f-f^{3}_{\delta,\mu}\|/ \left\|f\right\|$   \\ \midrule
$10^{-1}$ & 0.3200  & 0.3230  & 0.3310 \\
$10^{-2}$ & 0.1637  & 0.1689   & 0.1703 \\
$10^{-3}$  & 0.0932 &	0.0975  & 0.099\\
$10^{-4}$  & 0.0499  & 0.0521  & 0.0573   \\
$10^{-5}$  & 0.0243  & 0.0264 & 0.0296 \\\bottomrule
\end{tabular}}
\end{tabular}}
\end{center}
\vspace{-0.30cm}
\caption{Example \ref{example2}: Relative estimation errors for $ x_0 = 1 $ and $ p = 1.$}
\vspace{-0.30cm}
\label{tableej2}
\end{table}

\subsection{Discussion of results}

The numerical examples demonstrate that the three regularization operators presented in this paper effectively address the instability issues in the source estimation solution. Each of the three
operators studied smooths the inverse operator, resulting in a stable recovery. 
It is noteworthy that this stability is maintained regardless of the general characteristics of the function being determined, as demonstrated by the satisfactory results achieved even for the discontinuous source. Additionally, the three regularization strategies show to be robust, as the error committed in the estimation decreases as the error in the data decreases.

While no significant differences are observed between the performance of these, the operator $R^{1}_{\mu}$ yielded the best results, allowing for the estimation of the source with the lowest relative error at the different disturbance levels considered.

\section{Conclusions}

This paper addresses a specific case of the inverse source identification problem, focusing on estimating the time-dependent source term in a time-fractional linear parabolic equation using noisy time-dependent data. An analytical solution to the estimation problem is provided, demonstrating that the problem is ill-posed due to the instability of the solution.

To counteract this instability, three families of regularization operators are defined, specifically designed to counterbalance the inestability factor of the inverse operator. Additionally, a rule for selecting the regularization parameters is proposed, based on the noise level in the data and the degree of smoothness of the source to be identified. It is shown that this parameter selection rule leads to stable methods. A bound for the estimation errors is derived, which is optimal.

Two numerical examples of source recovery from different Hilbert spaces are presented. In both cases, the regularization approaches exhibit good performance. A comparison is also carried out between the sources reconstructed using the inverse operator and those obtained through the proposed regularization operators, showing that the latter provide accurate estimations of the considered sources. Moreover, the results suggest that the smoother the high-frequency filtering, the better the performance. This is an interesting aspect that deserves further analysis.

%% The Appendices part is started with the command \appendix;
%% appendix sections are then done as normal sections

\appendix
\section{Additional Results}
\label{apendice}

\begin{lmm}
\label{lemma1}
Let $z \in \C$ with $\Re(z)>0$ then
$\ds \left\vert \dfrac{1}{1-e^{-z }} \right\vert \le \dfrac{1}{1-e^{-\Re(z)}}$. In addition $\Re(\sqrt{z})\geq\sqrt{\Re(z)}.$
\end{lmm}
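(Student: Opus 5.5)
The lemma has two parts, and I would prove each by elementary estimates on complex exponentials and square roots.

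\emph{First part.} Write $z=a+ib$ with $a=\Re(z)>0$. Then $|e^{-z}|=e^{-a}<1$. The reverse triangle inequality gives
\begin{equation*}
|1-e^{-z}|\ge 1-|e^{-z}|=1-e^{-a}>0 .
\end{equation*}
Taking reciprocals yields $\left|\frac{1}{1-e^{-z}}\right|\le \frac{1}{1-e^{-\Re(z)}}$. This part is routine.

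\emph{Second part.} Let $\sqrt{z}$ denote the principal square root, which is the branch used for $h(\xi)$. Write $\sqrt{z}=u+iv$ with $u\ge 0$. Squaring gives
\begin{equation*}
\Re(z)=u^2-v^2 \le u^2 .
\end{equation*}
Since $\Re(z)>0$ and $u\ge 0$, taking square roots gives $u\ge\sqrt{\Re(z)}$, that is, $\Re(\sqrt{z})\ge\sqrt{\Re(z)}$.

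Alternatively, one can use the explicit formula $\Re(\sqrt{z})=\sqrt{(|z|+\Re z)/2}$ together with $|z|\ge \Re z$. Both routes are immediate.

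\emph{Main point to check.} The only delicate step is confirming that the principal branch gives $\Re(\sqrt z)\ge 0$. This holds because the principal root has argument in $(-\pi/2,\pi/2]$.

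It is also worth noting, though it is not part of the lemma, that the paper applies the first part with $z=h(\xi)x_0$. That application needs $\Re(h(\xi))>0$, which follows from the second part: it gives $\Re\sqrt{\beta^2+4\omega z(\xi)}\ge\sqrt{\beta^2+4\omega\nu+4\omega|\xi|^\alpha\cos(\alpha\pi/2)}>\beta$.
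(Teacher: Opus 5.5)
Your proof is correct and is essentially the paper's argument. For the first part, the paper expands $|1-e^{-z}|^2=(1-e^{-\Re(z)})^2+2e^{-\Re(z)}(1-\cos(\Im(z)))$, whereas you reach the same bound $|1-e^{-z}|\ge 1-e^{-\Re(z)}$ a bit more directly through the reverse triangle inequality; for the second part, your alternative route $\Re(\sqrt{z})=\sqrt{(|z|+\Re(z))/2}\ge\sqrt{\Re(z)}$ is exactly the paper's, and your closing remark that this gives $\Re(h(\xi))>0$ makes explicit a hypothesis check the paper only uses implicitly when it applies the first part with $h(\xi)x_0$.
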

\begin{proof}

Euler's formula using for complex numbers and we find that,
\begin{equation*}
\ds{\left|1-e^{-z}\right|^2= \left(1-e^{-\Re(z)}\right)^2+2e^{-\Re(z)}\left(1-cos\left(\Im(\omega)\right)\right)} \geq \left(1-e^{-\Re(\omega)}\right)^2,
\end{equation*}
therefore, 
\begin{equation*}
\left|1-e^{-z}\right| \geq 1-e^{-\Re(z)} \Longrightarrow \left\vert\dfrac{1}{1-e^{-z}} \right\vert \le \dfrac{1}{1- e^{-\Re(\omega)}}.
\end{equation*}

The proof of the second inequality follows directly from properties of complex numbers. It suffices to note that,
\begin{equation*}
\Re(\sqrt{z})=\sqrt{\dfrac{\Re(z)+\left|z\right|}{2}}\geq\sqrt{\Re(z)}.
\end{equation*}
\end{proof}

\begin{lmm}
\label{lemma2}
The function $g:\R^{+} \to \R$ defined by
$\ds g(x):=
\begin{cases}
   \dfrac{x}{1-e^{-x}}, & 0<x<1, \vspace{0.15cm} \\
   \dfrac{1}{1-e^{-x}}, & x\geq1,
   \end{cases}$
satisfies $g(x) < 2 $.
\end{lmm}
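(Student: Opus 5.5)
The plan is a case split at $x=1$, with each branch reduced to an elementary one-variable inequality. Throughout, use that $1-e^{-x}>0$ for every $x>0$, so $g$ is well defined and positive on $\R^{+}$.

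\emph{Case $x\geq 1$.} Here $g(x)=\dfrac{1}{1-e^{-x}}$, and this expression decreases in $x$ because $e^{-x}$ decreases. Hence
\begin{equation*}
g(x)\le \dfrac{1}{1-e^{-1}}=\dfrac{e}{e-1}.
\end{equation*}
Since $e>2$, we have $e<2(e-1)$, that is $\dfrac{e}{e-1}<2$. This gives $g(x)<2$ on $[1,\infty)$.

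\emph{Case $0<x<1$.} Here $g(x)=\dfrac{x}{1-e^{-x}}$. The inequality $g(x)<2$ is equivalent to $x<2(1-e^{-x})$. Rewrite it as $\varphi(x):=2-2e^{-x}-x>0$ on $(0,1)$, and treat $\varphi$ as a concave function:
\begin{itemize}
\item $\varphi(0)=0$;
\item $\varphi(1)=1-2/e>0$, since $e>2$;
\item $\varphi''(x)=-2e^{-x}<0$, so $\varphi$ is strictly concave.
\end{itemize}
A strictly concave function lies strictly above its chord on the open interval. Thus for $x\in(0,1)$,
\begin{equation*}
\varphi(x)>(1-x)\varphi(0)+x\varphi(1)=x\varphi(1)>0.
\end{equation*}
Alternatively, one can show $x\mapsto x/(1-e^{-x})$ is increasing and bound it by its value $e/(e-1)$ at $x=1$. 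That route needs a derivative sign check, which is why I prefer the concavity argument.

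Combining the two cases gives $g(x)<2$ on all of $\R^{+}$. No step is a real obstacle; the only point needing care is the strictness of the inequality on $(0,1)$, and strict concavity supplies it.
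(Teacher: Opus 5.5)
Your proof is correct. On $[1,\infty)$ you argue exactly as the paper does: $1/(1-e^{-x})$ is decreasing, you evaluate it at $x=1$, and you use $e/(e-1)<2$.

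On $(0,1)$ your route differs. The paper differentiates $x/(1-e^{-x})$ and obtains $g'(x)=\frac{1-e^{-x}(1+x)}{(1-e^{-x})^2}>0$, which implicitly relies on $e^{x}>1+x$. It concludes that $g$ increases towards $1/(1-e^{-1})$. Both branches are then controlled by the single constant $e/(e-1)\approx 1.58$, and $g$ is seen to increase and then decrease with its maximum at $x=1$.

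You instead prove the equivalent inequality $x<2(1-e^{-x})$ directly, from the concavity of $\varphi$ and its endpoint values. This avoids the quotient rule and the auxiliary inequality $e^{x}>1+x$, and strict concavity gives the strict inequality cleanly.

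Your argument is also no weaker than the paper's. The chord inequality $\varphi(x)>x\varphi(1)$ is equivalent to $1-e^{-x}>x(1-e^{-1})$, so it already gives $g(x)<e/(e-1)$ on $(0,1)$, the same constant as the paper. What the paper's monotonicity argument adds is only the qualitative shape of $g$. For the lemma as stated, and for its use in Lemma~\ref{lemma4}, either approach suffices.
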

\begin{proof}
First, consider $g$ at $(0,1)$. Differentiating, in this case, we have
\begin{equation*}
g'(x)= \left(\dfrac{x}{1-e^{-x}}\right)'= \dfrac{1-e^{-x}(1+x)}{(1 -e^{-x})^2} > 0,
\end{equation*}

the function $g$ is increasing in $(0,1)$. Then $\dfrac{x}{1-e^{-x}} \leq \dfrac{1}{1-e^{-1}}.$

On the other hand, for $x\geq1$, we have
\begin{equation*}
g'(x)=\left (\dfrac{1}{1-e^{-x}}\right) '=\dfrac{-e^{-x}}{(1-e^{-x} )^2} < 0,
\end{equation*}

the function $g$ is decreasing $ \forall x\geq1 $. Then $\dfrac{1}{1-e^{-x}} \leq \dfrac{1}{1-e^{-1}}.$

So, $g(x)\leq \dfrac{1}{1-e^{-1}} < 2$,
\end{proof}

%% \section{}
%% \label{}

%% If you have bibdatabase file and want bibtex to generate the
%% bibitems, please use
%%
%%  \bibliographystyle{elsarticle-num} 
%%  \bibliography{<your bibdatabase>}

%% else use the following coding to input the bibitems directly in the
%% TeX file.

\end{document}